\newtheorem{example}{Example}[section]
\newtheorem{definition}[example]{Definition}
\newtheorem{proposition}[example]{Proposition}
\newtheorem{theorem}[example]{Theorem}
\newtheorem{notation}[example]{Notation}
\newtheorem{lemma}[example]{Lemma}
\newtheorem{remark}[example]{Remark}
\newenvironment{proof}{\noindent \textbf{Proof:} }{$\Box$ \mbox{}}
\newcommand{\im}{\mathrm{Im}}
\begin{document}

\date{}
\author{Alper ODABA\c{S} and Erdal ULUALAN}
\title{Crossed Modules of Algebras as Ideal Maps}
\maketitle

\begin{abstract}
In this work, we  explore the close relationship between an ideal map structure $S\rightarrow End(R)$ on a homomorphism of commutative $k$-algebras $R\rightarrow S$  and an ideal simplicial algebra structure on the associated bar construction $Bar(S,R)$.
\end{abstract}

\section*{Introduction}

In this paper, we consider  the equivalence between the category of crossed modules of algebras (cf. \cite{porter1}) and the category of simplicial
commutative algebras with Moore complex of length 1 given in \cite{arvasi}. The main  aim of this note is to associate an explicit ideal simplicial algebra structure on the bar construction given a crossed module of algebras. We observed that a crossed module structure $(S\rightarrow End(R))$ or an ideal map structure  on a homomorphism of algebras $\eta:R\rightarrow S$, yields directly a simplicial algebra structure on the usual bar construction namely on the simplicial $\mathsf{k}$-module
$$
Bar(S,R)=(S\times R^{k})_{k\geqslant 0}.
$$
Thus $Bar(S,R)$ is isomorphic, as a simplicial $\mathsf{k}$-module, to a simplicial algebra which is compatible with the action of $R$  on the bar construction. Moreover this process is reversible.  Therefore we can summarize the result as follows: Given an algebra homomorphism $\eta:R\rightarrow S$, a crossed module structure or an ideal map structure on the homomorphism $\eta$ gives an ideal simplicial algebra structure  on the simplicial $\mathsf{k}$-module $Bar(S,R)$, and conversely, any ideal simplicial algebra structure on the simplicial $\mathsf{k}$-module $Bar(S,R)$ determines a crossed module  structure on the homomorphism $\eta$. These two explicit associations are mutual inverses. In the last section,  we are explaining how to give an extension of this result to Ellis's (crossed) squares of $k$-algebras (cf. \cite{ellis}). In section 5, considering  a \textit{crossed ideal structure} over the map $\alpha:\eta_1\rightarrow \eta_2$ between crossed modules $\eta_1$ and $\eta_2$, we proved  that a \textit{crossed ideal  map}  preserves the \textit{crossed ideals} in the category of crossed modules of commutative $k$-algebras.

These constructions in the category of  groups can be found in \cite{far1}. In fact, the results and general methods given in this work  are inspired by those proved for the corresponding case of groups using  homotopy normal maps in \cite{far1}. For further work about homotopy normal maps, see \cite{far2} and \cite{prez} and for the free normal closure of a homotopy normal map  see \cite{far3}.

\section{Simplicial sets and simplicial algebras}
Let $\mathsf{k}$ be a fixed commutative ring with identity. By a $\mathsf{k}$-algebra, we mean
a unitary $\mathsf{k}$-bimodule  $C$ endowed with a $\mathsf{k}$-bilinear associative multiplication $C\times C\rightarrow C$, $(c,c')\mapsto cc'$. The
algebra $C$ will as usual be called commutative if $cc'=c'c$ for all $c,c'\in C$. In this work, all algebras will be commutative and will be over the same fixed commutative ring $\mathsf{k}$. We will denote the category of all algebras over the commutative ring $\mathsf{k}$  by $\mathsf{Alg}$.

A simplicial  set $\mathsf{E}$ consists of a family
of sets $\{E_{n}\}$ together with face and degeneracy maps $d_{i}=d_{i}^{n}:E_{n}\rightarrow E_{n-1},$ \ \ $0\leqslant i\leqslant n$, \ $(n\neq 0)$ and $s_{i}=s_{i}^{n}:E_{n}\rightarrow E_{n+1}$, \ \
$0\leqslant i\leqslant n$, satisfying the usual simplicial
identities given in Andr\'{e} \cite{andre} or Illusie \cite{ill} for
example. It can be completely described as a functor
$\mathsf{E}:\Delta^{op}\rightarrow \mathsf{Sets}$ where $\Delta $ is
the category of finite ordinals $[n]=\{0<1<\cdots <n\}$
and non-decreasing maps.

We say that the simplicial set $\mathsf{E}$ is a simplicial $\mathsf{k}$-module (or $\mathsf{k}$-algebra) if $E_k$ is a $\mathsf{k}$-module (or a $\mathsf{k}$- algebra), for all $k$ and the face and degeneracy maps are homomorphisms of $\mathsf{k}$-modules (or  $\mathsf{k}$-algebras). Thus, a simplicial algebra can be defined as  a functor from the opposite category $\Delta ^{op}$ to $\mathsf{Alg}$.

\subsection{The simplicial $\mathsf{k}$-module $Bar(X,R)$}
In this section, we use  the usual bar construction of a simplicial $\mathsf{k}$-module by using the action of a $\mathsf{k}$-algebra on a $\mathsf{k}$-module.  First, we define  this action.

Let $R$ be a $\mathsf{k}$-algebra and $X$ be a $\mathsf{k}$-module. The action of $R$ on the $\mathsf{k}$-module $X$ is defined by the function $X\times R\rightarrow X$, $x:r\mapsto x^r$ (where $r\in R,x\in X$) satisfying the following conditions:
\begin{enumerate}
\item $(x)^{(r_1+r_2)}=(x^{r_1})^{r_2}$
\item $x^{0_{R}}=x$
\item $(x_1+x_2)^{r_1+r_2}=(x_1)^{r_1}+(x_2)^{r_2}$
\item $k(x)^r=(kx)^{kr}$
\end{enumerate}
for all $r,r_1,r_2\in R, x,x_1,x_2\in X, k\in \mathsf{k}$.

Let $R $ be a $\mathsf{k}$-algebra acting on the $\mathsf{k}$-module $X$. Then we obtain a $\mathsf{k}$-module $B_n=X\times R^{n}$
together with the operations
$$(x,r_1,r_2,\ldots,r_n)\oplus (x',r'_1,r'_2,\ldots,r'_n)=(x+x',r_1+r'_1,\ldots,r_n+r'_n)$$
for $x,x'\in X$ and $r_i,r'_i\in R_i$ and
$$k(x,r_1,r_2,\ldots,r_n)=(kx,kr_1,kr_2,\ldots,kr_n)$$
for $k\in \mathsf{k}$.

The bar construction
$$
B:=Bar(X,R)
$$
is the simplicial $\mathsf{k}$-module consisting of the following data.

\begin{enumerate}
\item for each integer $n\geqslant 0$, a $\mathsf{k}$-module  $B_n$ defined by $B_0=X$ for $n=0$, and $B_n=X\times R^{n}$ is the $\mathsf{k}$-module as described above for $n\geqslant 1$, together with

\item the face $\mathsf{k}$-module homomorphisms $d_i^n:d_i:B_n\rightarrow B_{n-1}$ for all $n\geqslant 1$ and $0\leqslant i\leqslant n$ defined by:

$(i)$ $d_0:(x,r_1,r_2,\ldots, r_n)\mapsto (x^{r_1},r_2,\ldots, r_n)$

$(ii)$ $d_i:(x,r_1,r_2,\ldots,r_i,r_{i+1},\ldots, r_n)\mapsto(x,r_1,r_2,\ldots,r_i+r_{i+1},\ldots, r_n)$ for $1\leqslant i <n$,

$(iii)$ $d_n:(x,r_1,r_2,\ldots, r_n)\mapsto(x,r_1,r_2,\ldots, r_{n-1}),$

\item and together with degeneracy $\mathsf{k}$-module homomorphisms; $s_i:B_n \rightarrow B_{n+1}$ defined by
$$
s_i:(x,r_1,r_2,\ldots, r_n)\mapsto(x,r_1,r_2,\ldots,r_i,0,r_{i+1},\ldots, r_n)
$$
for all $n\geqslant 0$ and $0\leqslant i \leqslant n$.
\end{enumerate}
In this construction we show briefly that $d_0$ is a $\mathsf{k}$-module homomorphism from $B_n$ to $B_{n-1}$. For
$u=(x,r_1,r_2,\ldots, r_n)$,$v=(x',r'_1,r'_2,\ldots, r'_n)\in B_n$  and $k\in \mathsf{k}$, we obtain
\begin{align*}
d_0(u\oplus v)&=d_0(x+x',r_1+r'_1,\ldots,r_n+r'_n)\\
&=((x+x')^{r_1+r'_1},r_2+r'_2,\ldots,r_n+r'_n)\\
&=(x^{r_1}+(x')^{r'_1},r_2+r'_2,\ldots,r_n+r'_n)\\
&=(x^{r_1},r_2,\ldots, r_n)\oplus((x')^{r'_1},r'_2,\ldots, r'_n)\\
&=d_0(u)\oplus d_0(v)
\intertext{ and }
d_0(ku)&=d_0(kx,kr_1,\ldots,kr_n)\\
&=((kx)^{kr_1},kr_2,\ldots,kr_n)\\
&=(k(x^{r_1}),kr_2,\ldots,kr_n)\\
&=k(x^{r_1},r_2,\ldots, r_n)\\
&=kd_0(u).
\end{align*}

\subsection{An ideal simplicial algebra structure on $Bar(S,R)$}

Suppose  now that $X=S$ is a  $\mathsf{k}$-algebra and the $\mathsf{k}$-algebra $R$ acts on the underlying $\mathsf{k}$-module $S$ of  the $\mathsf{k}$-algebra  $S$ via a homomorphism $\eta:R\rightarrow S$, i.e. the action is
$$
r:s\rightarrow s^ r=s +\eta(r)
$$
for all $r\in R$ and $s\in S$. In this case we obtain
\begin{enumerate}
\item $s^{(r_1+r_2)}=s+\eta(r_1+r_2)=(s+\eta(r_1))+\eta(r_2)=(s^{r_1})^{r_2}$
\item $s^{0_R}=s+\eta(0_R)=s+0_S=s$
\item $(s_1+s_2)^{(r_1+r_2)}=(s_1+s_2)+\eta(r_1+r_2)=s_1+\eta(r_1)+s_2+\eta(r_2)=(s_1)^{r_1}+(s_2)^{r_2}$
\item $k(s)^r=k(s+\eta(r))=ks+\eta(kr)=(ks)^{kr}$
\end{enumerate}
for all $s,s_1,s_2\in S$ and $r,r_1,r_2\in R$ and $k\in \mathsf{k}$. We denote the resulting simplicial  $\mathsf{k}$-module  by $Bar(S,R)$ suppressing the map $\eta$.

\begin{definition}
Let $B:=Bar(S,R)$. By an ideal simplicial algebra structure on $B$ we mean the following

$(i)$ $B_0=S$ is the $\mathsf{k}$-algebra $S$,

$(ii)$ $B_k:=S\times R^k$ for $k\geqslant 1$, is endowed with a $\mathsf{k}$-algebra structure for all $k\geqslant 1$, we denote the multiplication by
$$
(s,r_1,\ldots, r_k)\ast (s',r'_1,\ldots, r'_k).
$$

$(iii)$ the face  $d_i^k$ and the degeneracy  $s_j^k$ $\mathsf{k}$-module homomorphisms are $\mathsf{k}$-algebra homomorphisms.

$(iv)$ $$(s,0,\ldots,0)\ast(s',r'_1,\ldots, r'_k)=(ss',0,\ldots, 0)$$
for all $s,s'\in S$ and $(s',r'_1,\ldots, r'_k)\in B_k$ where the operations takes place in $B_k$.
\end{definition}

\begin{remark} \rm{}
By the natural action of $S$ on $Bar(S,R)$, we mean
$$
s:(s',r_1,\ldots,r_k)\mapsto s\cdot(s',r_1,\ldots,r_k)=(ss',r_1,\ldots,r_k)
$$
for all $k\geqslant 0$, $(s',r_1,\ldots,r_k)\in B_k$ and $s\in S$. When  we say that the multiplication in $Bar(S,R)$
is compatible with the natural action of $S$, we mean that condition $(iv)$ of the above definition holds.
\end{remark}
\begin{notation}\rm{}
 Let $k\geqslant 1$. We denote
\begin{enumerate}
\item $S_k:=\{(s,0_R,0_R,\ldots,0_R):s\in S\}$ is a subalgebra of $B_k$.
\item $R_k:=\{(0_S,r_1,r_2,\ldots,r_k):r_i\in R\}$ is an algebra ideal of $B_k$.
\end{enumerate}
\end{notation}
\begin{lemma}
Suppose that $Bar(S,R)$ is endowed with an ideal simplicial algebra structure. Let $ k\geqslant 1$. Then $S_k$ is an ideal of $B_k$ which is isomorphic to $S$, $R_k$ is an ideal of $B_k$, $B_k=S_k + R_k$ and $S_k\cap R_k=\{0\}$.
\end{lemma}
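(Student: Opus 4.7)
The plan is to exploit conditions (iii) and (iv) of the ideal simplicial algebra structure; these are the only abstract handles available on the multiplication $\ast$. I will handle $S_k$ and the module decomposition first, then tackle $R_k$, which is where the real work lies.

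First, condition (iv) directly asserts that
\[
(s,0,\ldots,0)\ast(s',r'_1,\ldots,r'_k)=(ss',0,\ldots,0)\in S_k,
\]
so $S_k$ is absorbing under multiplication in $B_k$ (commutativity handles the other side) and hence an ideal. Restricting (iv) to pairs in $S_k$ gives $(s,0,\ldots,0)\ast(s',0,\ldots,0)=(ss',0,\ldots,0)$, and together with the coordinatewise addition this makes $s\mapsto(s,0,\ldots,0)$ an algebra isomorphism $S\to S_k$. Next, the $\mathsf{k}$-module identity $(s,r_1,\ldots,r_k)=(s,0,\ldots,0)\oplus(0,r_1,\ldots,r_k)$ yields $B_k=S_k+R_k$, while inspection of the first coordinate gives $S_k\cap R_k=\{0\}$.

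Now for the ideal property of $R_k$. Take $\rho=(0,r_1,\ldots,r_k)\in R_k$ and an arbitrary $\sigma=(s',r'_1,\ldots,r'_k)\in B_k$, and decompose $\sigma=(s',0,\ldots,0)\oplus(0,r'_1,\ldots,r'_k)$. Distributivity of $\ast$ over $\oplus$ gives
\[
\rho\ast\sigma \;=\; \rho\ast(s',0,\ldots,0)\ \oplus\ \rho\ast(0,r'_1,\ldots,r'_k).
\]
The first summand vanishes, since by commutativity and (iv) we have $(s',0,\ldots,0)\ast\rho=(s'\cdot 0,0,\ldots,0)=0$. For the second summand, I would consider the iterated ``drop last'' face map $\pi:=d_1^1\circ d_2^2\circ\cdots\circ d_k^k:B_k\to B_0=S$. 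By condition (iii) each $d_n^n$ is an algebra homomorphism, so $\pi$ is one too, and by definition $\pi(x,r_1,\ldots,r_k)=x$. Hence
\[
\pi\bigl(\rho\ast(0,r'_1,\ldots,r'_k)\bigr)=\pi(\rho)\cdot\pi(0,r'_1,\ldots,r'_k)=0\cdot 0=0,
\]
so this product has zero first coordinate and therefore lies in $R_k$; it follows that $\rho\ast\sigma\in R_k$.

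The main obstacle is precisely this last step: the multiplication on $B_k$ is only prescribed abstractly, with no explicit formula, so the first coordinate of $\rho\ast(0,r'_1,\ldots,r'_k)$ cannot be read off directly. The crucial device is the projection $\pi$, which converts condition (iii) into concrete information about the first coordinate of an arbitrary product and thereby forces $R_k$ to be closed under multiplication by $B_k$.
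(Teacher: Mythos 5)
Your proof is correct and its crux is the same as the paper's: $R_k$ is the kernel of the composite of last face maps $\pi=d_1^1\circ d_2^2\circ\cdots\circ d_k^k:B_k\to B_0=S$, which is an algebra homomorphism by condition (iii) --- indeed your decomposition of $\sigma$ and the distributivity step are an unnecessary detour, since $\pi(\rho\ast\sigma)=\pi(\rho)\pi(\sigma)=0$ already puts $\rho\ast\sigma$ in $R_k$ directly. The only divergence is cosmetic: for $S_k$ you invoke condition (iv) to get both the ideal property and the isomorphism with $S$, whereas the paper exhibits $S_k$ as the image of $S$ under iterated injective degeneracies; your route is if anything slightly more complete for the claim that $S_k$ is an \emph{ideal} rather than merely a subalgebra.
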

\begin{proof}
$S_k$ is the image of $S_{k-1}$ under $s_{k-1}$, so by induction it is a subalgebra of $B_k$ and $s_{k-1}$ is injective, it is isomorphic to $S$. Also, $R_k$ is the kernel of $d_k\circ d_{k-1}\circ \cdots \circ d_1$, so it is an ideal of $B_k$. Clearly $S_k\cap R_k=\{0\}$ and $B_k=S_k + R_k$.
\end{proof}

\subsection{Crossed modules, ideal maps and ideal structures}

Crossed modules of groups were initially defined by Whitehead in \cite{w}. The algebra analogue has been studied by Porter in \cite{porter1}.

A crossed module of algebras consists of an algebra homomorphism $\eta:R\rightarrow S$ which we call here an ideal map (see Remark \ref{2.5}) together with a  homomorphism $l:S\rightarrow End(R)$ which we call here an ideal structure (or a crossed module structure) on $\eta$ such that when denoting by $s\cdot r$ the image of $r\in R$ under $l_s$ for $s\in S$  which is satisfying the conditions below (for all $k\in \mathsf{k}$, $r,r'\in R$ and $s,s'\in S$)
\begin{enumerate}
\item $k(s\cdot r)=(ks)\cdot r= s\cdot (kr)$
\item  $s\cdot (r+r')=s\cdot r +s\cdot r'$
\item $(s+s')\cdot r=s\cdot r+s'\cdot r$
\item $s\cdot (rr')=(s\cdot r)r'=r(s\cdot r')$
\item $(ss')\cdot r=s\cdot(s'\cdot r) $
\end{enumerate}
and the following two requirements are satisfied:

$(CM1)$ $\eta(l_s(r))=s\eta(r)$ for all $s\in S$ and $r\in R$.

$(CM2)$ $l_{\eta(r)}(r')=rr'$ for all $r,r'\in R$.

\begin{remark} \rm{}\label{2.5}
Let $S$ and $R$ be algebras and let $\eta:R\rightarrow S$ be an algebra homomorphism.
If $l_s:S\rightarrow End(R); s\in S$ is a crossed module structure on the  homomorphism $\eta:R\rightarrow S$, then $\im(\eta)$ becomes an ideal of $S$. Indeed, for all $s\in S$ and $s'\in \im(\eta)$ with $s'=\eta(r); r\in R$, we obtain from $(CM1)$,
$$
ss'=s\eta(r)=\eta(l_s(r))\in \im(\eta).
$$
Thus $\im(\eta)$ is an ideal of $S$. Conversely, if $I$ is an ideal of the algebra $S$, then the inclusion map $I\rightarrow S$ is a crossed module with the natural action of $S$ on $I$. Further $\ker\eta$ is an ideal in $R$  and a module over $S$. The ideal $\im(\eta)$ of $S$ acts trivially on $\ker\eta$, hence $\ker\eta$ inherits an action of $S/\im(\eta)$ to become an  $S/\im(\eta)$-module.

Now let $S$ be an algebra and $R$ be subalgebra of $S$. Let $\eta:R\rightarrow S$ be the inclusion map and let $S/R$ be the set of cosets of $R$ in $S$. Then there is a natural action of $S$ on the set $S/R$ via left multiplication and it is easy to check that the following statements are equivalent.

$(i)$ $R$ is an ideal of $S$.

$(ii)$ There exists a crossed module structure on the inclusion map $\eta:R\rightarrow S$.

$(iii)$ There exists an algebra structure  on $S/R$ with the action of $S$ on $S/R$ given by
$$
s\cdot(s'+R)=ss'+R
$$
for all $s,s'\in S$.
\end{remark}

\section{From an ideal simplicial algebra structure on $Bar(S,R)$ to an ideal structure on the map $\eta:R\rightarrow S$}
In this section $R$ and $S$ are algebras and $\eta:R\rightarrow S$ is an algebra homomorphism. The purpose of this section is to prove that we can recover the crossed module structure (or an ideal structure) on a homomorphism between $\mathsf{k}$-algebras, from an ideal simplicial algebra structure on the associated bar construction.
\begin{lemma}\label{2.1}
Suppose that $Bar(S,R)$ is endowed with an ideal simplicial algebra structure. Then
\begin{enumerate}
\item $(0,r)\oplus(0,r')=(0,r+r')$ and $(0,r)\ast(0,r')=(0,rr')$
for all $r,r'\in R$ where the operations take place in $R_1$.
\item The map $l:S\rightarrow End(R)$ defined by
$$
l_s:(r)\mapsto s\cdot r
$$
gives an ideal structure (or a crossed module structure) on $\eta$,
where
$$
(0,s\cdot r)=(s,0)\ast (0,r).
$$
\end{enumerate}
\end{lemma}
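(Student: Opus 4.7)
The plan is to extract the crossed-module data on $\eta:R\to S$ from the multiplication $\ast$ on $B_1=S\times R$, using the ideal decomposition $B_1=S_1\oplus R_1$ of the previous lemma together with the algebra-homomorphism constraints on the simplicial face and degeneracy maps.

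For part (1), the identity $(0,r)\oplus(0,r')=(0,r+r')$ is immediate from the $\mathsf{k}$-module structure on $B_1$. For the multiplicative statement, since $R_1$ is an ideal of $B_1$, the product $(0,r)\ast(0,r')$ lies in $R_1$, so its first coordinate vanishes and we may write $(0,r)\ast(0,r')=(0,\rho(r,r'))$ for some $\rho(r,r')\in R$. Applying the algebra homomorphism $d_0$ yields $\eta(\rho(r,r'))=\eta(r)\eta(r')=\eta(rr')$; the identification $R\cong R_1$ via the second-coordinate projection, compatible with the simplicial structure, then forces $\rho(r,r')=rr'$.

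For part (2), since $R_1$ is an ideal of $B_1$, the product $(s,0)\ast(0,r)$ lies in $R_1$, so it has the form $(0,s\cdot r)$ for a uniquely determined $s\cdot r\in R$; this defines $l_s(r)=s\cdot r$. The $\mathsf{k}$-linearity conditions on $l_s$ follow from the $\mathsf{k}$-bilinearity and distributivity of $\ast$ on $B_1$. The multiplicativity $(ss')\cdot r=s\cdot(s'\cdot r)$ follows by associating the triple product $(s,0)\ast(s',0)\ast(0,r)$ and using that $(s,0)\ast(s',0)=(ss',0)$ (a consequence of $s_0$ being an algebra homomorphism). The Leibniz identity $s\cdot(rr')=(s\cdot r)r'$ follows from associating $(s,0)\ast(0,r)\ast(0,r')$ and using part (1), while the further equality $(s\cdot r)r'=r(s\cdot r')$ then follows from the commutativity of $R$. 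For $(CM1)$, applying $d_0$ to the equation $(0,s\cdot r)=(s,0)\ast(0,r)$ and using that $d_0$ is an algebra homomorphism gives $\eta(s\cdot r)=d_0(s,0)\,d_0(0,r)=s\eta(r)$.

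The main obstacle is $(CM2)$, $\eta(r)\cdot r'=rr'$, which is the only axiom genuinely tying the newly defined action back to the multiplication in $R$ through $\eta$. My approach is to exploit the identification $(\eta(r),0)=s_0(d_0(0,r))$ together with the associativity of $\ast$ and the simplicial identity $d_0\circ s_0=\id$, reducing the product $(\eta(r),0)\ast(0,r')$ to a computation already covered by part (1). The delicate step is aligning the algebra-homomorphism properties of the face and degeneracy maps with the ideal decomposition so that the computation reduces cleanly.
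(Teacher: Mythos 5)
Your set-up---the decomposition $B_1=S_1+R_1$ from the preceding lemma, bilinearity of $\ast$, and the requirement that the faces and degeneracies be algebra homomorphisms---is the same as the paper's, and your handling of the linearity axioms, of $(ss')\cdot r=s\cdot(s'\cdot r)$ via $s_0$ being an algebra homomorphism, and of $(CM1)$ by applying $d_0$ to $(s,0)\ast(0,r)$ agrees with Lemma \ref{3.3} and Proposition \ref{3.6}. However, the two points you yourself flag as needing the simplicial structure, namely $(0,r)\ast(0,r')=(0,rr')$ and $(CM2)$, are exactly where your argument does not close, and for the same reason in both cases: every computation you propose lives in $B_1$ and is tested only against $d_0$, so it determines the relevant element of $R$ only modulo $\ker\eta$. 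For part (1), $d_0$ gives $\eta(\rho(r,r'))=\eta(rr')$, hence only $\rho(r,r')=rr'+z$ with $z\in\ker\eta$, since $\eta$ is not assumed injective; the appeal to ``the identification $R\cong R_1$ via the second coordinate'' is circular, because whether that $\mathsf{k}$-module identification is multiplicative is precisely the claim being proved. For $(CM2)$, writing $(\eta(r),0)=s_0(d_0(0,r))$ does not reduce the product to part (1): the difference $(\eta(r),-r)=(\eta(r),0)-(0,r)$ lies in $\ker d_0$, and since $\ker d_0$ and $R_1=\ker d_1$ are both ideals of $B_1$, their product lies in $\ker d_0\cap\ker d_1=\{0\}\times\ker\eta$, which need not vanish. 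So this route yields only $\eta(r)\cdot r'\equiv rr'\pmod{\ker\eta}$, not $(CM2)$.

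The paper's proof (which is distributed over Lemmas \ref{3.3}--\ref{3.5} and Proposition \ref{3.6} rather than given after the statement) escapes the second difficulty by going up one simplicial degree: Lemma \ref{3.5} computes the multiplication on the ideal $R_2\subseteq B_2$ by playing $d_1$ and $d_2$ against each other (these faces do not involve $\eta$, so they pin coordinates down exactly), identifying $R_2$ with $R\ltimes R$; then $d_0|_{R_2}:(0,a,b)\mapsto(\eta(a),b)$ is an algebra homomorphism into $B_1=S\ltimes R$, and Lemma \ref{3.4} shows that this is equivalent to $(CM2)$ on the nose. Some version of this degree-two argument is unavoidable: no manipulation inside $B_1$ alone can separate $rr'$ from $rr'+z$ with $z\in\ker\eta$. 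You should be aware, though, that the base case $(0,r)\ast(0,r')=(0,rr')$ is also essentially asserted in the paper (it is called ``easy to see'' in Lemma \ref{3.5}, and item $(iv)$ of the lemma following this one is not covered by its proof), so your difficulty with part (1) is shared with the source; the difference is that your proposed justification is circular rather than merely omitted, and it should not be presented as a proof.
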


\begin{lemma}
Let $k\geqslant 0$ and $r,r'\in R$. Then

$(i)$ The zero element of $B_k$ is $(0_S,0_R,\ldots,0_R)$,

$(ii)$ $(0,-r,r)\oplus (0,0,r')=(0,-r,r+r'),$

$(iii)$ $(-\eta(r),r)\oplus(0,r')=(-\eta(r),r+r'),$

$(iv)$ $(0,r)\ast(0,r')=(0,rr')$.
\end{lemma}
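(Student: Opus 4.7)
The plan is to observe that parts (i)--(iii) are immediate from the componentwise $\mathsf{k}$-module operation $\oplus$ defined at the start of Section 1.2, while part (iv) is just a restatement of the second equation in Lemma \ref{2.1}(1), which is already in hand. So the whole proof is really a bookkeeping exercise, organised as follows.

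First, for (i), I would invoke the definition of ideal simplicial algebra structure: $B_k$ is equipped with a $\mathsf{k}$-algebra structure whose underlying $\mathsf{k}$-module is the one constructed in Section 1.2 with addition $\oplus$. The explicit formula $(x,r_1,\ldots,r_n)\oplus(x',r'_1,\ldots,r'_n)=(x+x',r_1+r'_1,\ldots,r_n+r'_n)$ immediately identifies the additive identity as $(0_S,0_R,\ldots,0_R)$; for $k=0$ this specialises to $0_S\in S$. For (ii) and (iii) I would simply apply this componentwise rule: $(0,-r,r)\oplus(0,0,r')=(0+0,-r+0,r+r')=(0,-r,r+r')$ and $(-\eta(r),r)\oplus(0,r')=(-\eta(r)+0,r+r')=(-\eta(r),r+r')$. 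Finally, (iv) is $(0,r)\ast(0,r')=(0,rr')$ in $B_1$, which is the second clause of Lemma \ref{2.1}(1) and was derived there from condition $(iv)$ of the definition of ideal simplicial algebra structure together with the crossed module identities.

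The one mildly subtle point, and essentially the only place where the proof uses more than direct calculation, is the implicit claim in (i) that the addition of the $\mathsf{k}$-algebra structure on $B_k$ is actually $\oplus$ and not some other $\mathsf{k}$-module structure on the set $S\times R^k$. This is forced by condition $(iii)$ of the definition: the face and degeneracy maps were built in Section 1.2 as $\mathsf{k}$-module homomorphisms with respect to $\oplus$, and they are now required to be $\mathsf{k}$-algebra homomorphisms, hence in particular additive for the algebra's own addition. Since no exotic addition can keep all the $d_i^k$ and $s_j^k$ additive, $\oplus$ is pinned down as the addition of the algebra, and the computations above yield (i)--(iii) without further work. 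I do not expect any genuine obstacle.
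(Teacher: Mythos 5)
Your argument is correct under the natural reading of the definitions, but for parts (i)--(iii) it takes a genuinely different route from the paper, and the difference is worth making explicit. The paper does \emph{not} read these identities off from the componentwise formula for $\oplus$: it reconstructs them from the simplicial structure. For (i) it argues by induction, using that $s_0$ is an algebra homomorphism and hence carries the zero of $B_k$ to the zero of $B_{k+1}$; for (ii) it writes $(0,-r,r)\oplus(0,0,r')=(0,-r,x)$ with the last coordinate $x$ \emph{a priori unknown}, and pins it down to $x=r+r'$ by applying the additive maps $d_2^2$ and $d_1^2$; (iii) is then obtained from (ii) by applying $d_0^2$. In other words the paper treats the addition on $B_k$ as an operation to be recovered from the requirement that faces and degeneracies be homomorphisms --- the Farjoun--Segev template from the group case, where the bar construction carries no prior algebraic structure. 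Your shortcut is legitimate here because the definition of an ideal simplicial algebra structure only introduces a new multiplication $\ast$ on the already-fixed simplicial $\mathsf{k}$-module $Bar(S,R)$, so the addition is componentwise by fiat; what you gain is brevity, while the paper's argument has the (unadvertised) virtue of working even if one only assumes compatibility of the unknown operations with the simplicial maps. Two caveats. First, your closing assertion that ``no exotic addition can keep all the $d_i^k$ and $s_j^k$ additive'' is exactly the nontrivial claim that the paper's computation verifies on the elements it needs; in the ``unknown operations'' setting you cannot simply assert it, so either commit to the definitional reading (in which case drop the assertion as unnecessary) or prove it. Second, neither you nor the paper actually proves (iv): the paper's proof stops after (iii), and your deferral to Lemma \ref{2.1}(1) points to a statement the paper itself leaves unproved (and which certainly cannot be ``derived from the crossed module identities,'' since the crossed module structure is what is being constructed from the $\ast$-multiplication, not the other way around). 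Deriving $(0,r)\ast(0,r')=(0,rr')$ from condition $(iv)$ of the definition together with the simplicial identities is a small but genuine computation that is missing on both sides.
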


\begin{proof}

$(i)$ By definition, the zero element of $B_0=S$ is the zero element $0_S$ of $S$. Then by induction since $s_0:B_k\rightarrow B_{k+1}$ is an algebra homomorphism, for all $k\geqslant 0$, part $(i)$ follows. For example $s_0(0_S)=(0_S,0_R)$ is the zero element of $B_1$ and $s_0(0_S,0_R)=(0_S,0_R,0_R)$ is the zero element of $B_2$ and so on.

$(ii)$ Applying $d_2^2$ and using $(i)$ we get that
$$
(0_S,-r,r)\oplus(0_S,0_R,r')=(0_S,-r,x).
$$
Applying $d_1^2$ and using $(i)$ again we get that
$$
(0_S,r')=(0_S,-r+x)
$$
so; $x=r+r'$ and $(ii)$ holds.

$(iii)$ This part follows from $(ii)$ by applying $d_0^2$.
\end{proof}

Notice that $B_1=S\ltimes R$ is a semidirect product algebra of $R$ by $S$, that is, the addition and the multiplication in $B_1$ are given respectively by
$$
(s,r)\oplus(s',r')=(s+s',r+r')
$$
and
$$
(s,r)\ast(s',r')=(ss',s\cdot r' + s'\cdot r+rr')
$$
for all $s,s'\in S$ and $r,r'\in R$.

\begin{lemma}\label{3.3}
The map $\Phi:S\ltimes R \rightarrow S$ defined by $\Phi(s,r)=s+\eta(r)$ is a homomorphism of algebras if and only if $\eta$ satisfies $(CM1)$ above.
\end{lemma}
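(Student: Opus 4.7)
The plan is to verify the two defining properties of an algebra homomorphism for $\Phi$ separately, and to identify $(CM1)$ as precisely the content of the multiplicative axiom.

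First I would observe that additivity of $\Phi$ is automatic: since $\eta$ is (by assumption) an algebra homomorphism, in particular $\mathsf{k}$-linear, we have
$$\Phi((s,r)\oplus(s',r'))=(s+s')+\eta(r+r')=(s+\eta(r))+(s'+\eta(r'))=\Phi(s,r)+\Phi(s',r').$$
This part neither requires nor produces any condition on $l$.

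The heart of the matter is the multiplicativity condition $\Phi((s,r)\ast(s',r'))=\Phi(s,r)\Phi(s',r')$. Using the semidirect product formula recalled just before the lemma, the left-hand side expands as
$$ss'+\eta(s\cdot r')+\eta(s'\cdot r)+\eta(rr'),$$
while the right-hand side expands as
$$(s+\eta(r))(s'+\eta(r'))=ss'+s\eta(r')+s'\eta(r)+\eta(r)\eta(r').$$
Since $\eta$ is an algebra homomorphism, $\eta(rr')=\eta(r)\eta(r')$, so these two expressions are equal if and only if
$$\eta(s\cdot r')+\eta(s'\cdot r)=s\eta(r')+s'\eta(r) \qquad (\ast)$$
for all $s,s'\in S$ and $r,r'\in R$.

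For the ``if'' direction, assuming $(CM1)$ gives $\eta(s\cdot r')=s\eta(r')$ and $\eta(s'\cdot r)=s'\eta(r)$ termwise, so $(\ast)$ holds and $\Phi$ is multiplicative. For the ``only if'' direction, specializing $(\ast)$ to $s'=0$ and $r=0$ collapses it to $\eta(s\cdot r')=s\eta(r')$, which is exactly $(CM1)$. No step is a genuine obstacle here; the only thing to be careful about is to avoid circularity by not invoking $(CM1)$ when extracting it from $(\ast)$, which is handled by the specialization above.
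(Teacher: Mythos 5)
Your proof is correct and follows essentially the same route as the paper: expand both sides of the multiplicativity condition using the semidirect product formula and observe that the discrepancy is exactly $(CM1)$. In fact your treatment is slightly more complete, since the paper only writes out the chain of equalities in the ``if'' direction, whereas your specialization of $(\ast)$ at $s'=0$, $r=0$ (using $0\cdot r=0$ and $\eta(0)=0$) cleanly extracts $(CM1)$ for the ``only if'' direction, which the paper leaves implicit.
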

\begin{proof}
For all $(s,r),(s',r')\in S\ltimes R$, we have
\begin{align*}
\Phi((s,r)\oplus(s',r'))=&\Phi((s+s',r+r'))\\
=&s+s'+\eta(r+r')\\
=&s+\eta(r)+s'+\eta(r')\\
=&\Phi(s,r)+\Phi(s',r')
\intertext{ and }
\Phi((s,r)\ast(s',r'))=&\Phi(ss',s\cdot r' + s'\cdot r+rr')\\
=&ss'+\eta(s\cdot r' + s'\cdot r+rr')\\
=&ss'+s\eta(r')+s'\eta(r)+\eta(r)\eta(r')\ \ \text{ since } (CM1)\\
=&s(s'+\eta(r'))+\eta(r)(s'+\eta(r'))\\
=&(s+\eta(r))(s'+\eta(r'))\\
=&\Phi((s,r))\Phi((s',r')).
\end{align*}
\end{proof}
\begin{lemma}\label{3.4}
Consider the action of $R$ on itself via multiplication and form the semidirect product $R\ltimes R$ with respect to this action. Thus
$$
(a,b)\oplus(c,d)=(a+c,b+d)
$$
and
$$
(a,b)\ast(c,d)=(ac,ad+bc+bd), a,b,c,d\in R.
$$
Then the map $\Phi:R\ltimes R \rightarrow S\ltimes R$ defined by $(a,b)\mapsto (\eta(a),b)$ is a homomorphism if and only if $\eta$ satisfies $(CM2)$.
\end{lemma}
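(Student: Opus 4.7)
The plan is to separate the verification of $\Phi$ into its additive and multiplicative parts. The additive part is immediate: both semidirect product additions are coordinatewise, and $\eta$ is additive, so $\Phi$ preserves $\oplus$ with no hypothesis on $\eta$. All of the content therefore lies in the multiplicative compatibility, which I would establish by computing both sides of $\Phi((a,b)\ast(c,d)) = \Phi(a,b)\ast\Phi(c,d)$ directly and then reading off the required condition.

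On the left, using the multiplication in $R\ltimes R$ (where the action of $R$ on itself is ordinary multiplication) together with the fact that $\eta$ is an algebra homomorphism, I would get
\[
\Phi\bigl((a,b)\ast(c,d)\bigr) = \Phi(ac,\,ad+bc+bd) = \bigl(\eta(a)\eta(c),\,ad+bc+bd\bigr).
\]
On the right, using the formula for the multiplication in $S\ltimes R$ displayed just before Lemma \ref{3.3}, I would get
\[
\Phi(a,b)\ast\Phi(c,d) = \bigl(\eta(a)\eta(c),\,\eta(a)\cdot d + \eta(c)\cdot b + bd\bigr).
\]
The first coordinates always agree, so $\Phi$ is multiplicative if and only if
\[
\eta(a)\cdot d + \eta(c)\cdot b = ad + bc \qquad \text{for all } a,b,c,d\in R.
\]

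For the two directions I would argue as follows. If $(CM2)$ holds, then $\eta(a)\cdot d = ad$ and $\eta(c)\cdot b = cb = bc$ by commutativity of $R$, and the required identity holds term by term. Conversely, assuming $\Phi$ is a homomorphism, I would specialise the displayed identity at $b = 0_R$ and $c = 0_R$; both $\eta(c)\cdot b$ and $bc$ vanish, and the equation collapses to $\eta(a)\cdot d = ad$ for all $a,d\in R$, which is exactly $(CM2)$. There is no real obstacle here; the only point that needs care is keeping straight which semidirect product is being used on each side, and in particular not confusing the action of $R$ on itself (multiplication) with the action of $S$ on $R$ given by $l$.
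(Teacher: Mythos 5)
Your proposal is correct and follows essentially the same route as the paper: a direct comparison of $\Phi((a,b)\ast(c,d))$ with $\Phi(a,b)\ast\Phi(c,d)$ using the two semidirect product multiplications, with the additive part being trivial. The only difference is that you make the converse direction explicit by specialising at $b=c=0_R$, whereas the paper writes a single chain of equalities invoking $(CM2)$ and leaves the reverse implication implicit; your version is, if anything, slightly more complete.
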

\begin{proof}
For all $(a,b),(c,d)\in R\ltimes R$, we obtain
\begin{align*}
\Phi((a,b)\oplus(c,d))=&\Phi((a+c,b+d))\\
=&(\eta(a+c),b+d)\\
=&(\eta(a),b)+(\eta(c),d)\\
=&\Phi(a,b)+\Phi(c,d)
\intertext{ and }
\Phi((a,b))\Phi((c,d))=&(\eta(a),b)(\eta(c),d)\\
=&(\eta a\eta c,\eta(a)\cdot d+\eta(c)\cdot b+bd )\\
=&(\eta(ac),ad+bc+bd) \ \text{ since } (CM2)\\
=&\Phi(ac,ad+bc+bd)\\
=&\Phi((a,b)\ast(c,d)).
\end{align*}
\end{proof}
\begin{lemma}\label{3.5}
Let $a_i, b_i \in R$.  Then

$(i)$
$$(0_S,a_1,\ldots,a_k)\ast(0_S,b_1,\ldots,b_k)=(0_S,a_1b_1,a_1b_2+a_2(b_1+b_2),\ldots, (\sum\limits_{i=1}^{k-1}a_i)b_k+a_k\sum\limits_{i=1}^{k}b_i).$$

$(ii)$ Let $s\in S$ and $(0_S,a_1,a_2,\ldots,a_k)\in R_k$. Then
$$
(0_S,a_1,\ldots,a_k)\ast(s,0_R,\ldots, 0_R)=(0_S,s\cdot a_1, s\cdot a_2,\ldots, s\cdot a_k).
$$
\end{lemma}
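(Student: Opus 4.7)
The plan is to prove both parts by induction on $k$, exploiting that every face map $d_i^k\colon B_k\to B_{k-1}$ is an algebra homomorphism (condition $(iii)$ of the ideal simplicial algebra structure) and that $R_k$ is an ideal of $B_k$, so the product in $(i)$ again lies in $R_k$.

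For part $(i)$, the base case $k=1$ is just the identity $(0,r)\ast(0,r')=(0,rr')$ already recorded in Lemma \ref{2.1}. For the inductive step, write the left hand side as $(0_S,c_1,\ldots,c_k)\in R_k$. Applying $d_k$, which simply drops the last coordinate, yields
\[
(0_S,c_1,\ldots,c_{k-1})=(0_S,a_1,\ldots,a_{k-1})\ast(0_S,b_1,\ldots,b_{k-1}),
\]
and the inductive hypothesis identifies $c_1,\ldots,c_{k-1}$ with the asserted expressions. To pin down $c_k$, apply $d_{k-1}$, which collapses the last two $R$-coordinates in each factor: the left hand side becomes $(0_S,c_1,\ldots,c_{k-2},c_{k-1}+c_k)$, while the right hand side, by induction on the collapsed tuples $(0_S,a_1,\ldots,a_{k-1}+a_k)$ and $(0_S,b_1,\ldots,b_{k-1}+b_k)$, gives an explicit value for the last entry. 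Subtracting the already known $c_{k-1}$ and using distributivity in $R$ leaves exactly
\[
c_k=\Bigl(\sum_{i=1}^{k-1}a_i\Bigr)b_k+a_k\sum_{i=1}^{k}b_i,
\]
as claimed.

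For part $(ii)$, the base case $k=1$ reduces via commutativity to $(s,0)\ast(0,a_1)=(0,s\cdot a_1)$, which is the defining formula for the crossed module action in Lemma \ref{2.1}. For the inductive step, apply $d_k$ to $(0_S,a_1,\ldots,a_k)\ast(s,0,\ldots,0)$; the inductive hypothesis identifies the first $k-1$ entries of the product as $s\cdot a_1,\ldots,s\cdot a_{k-1}$. Then apply $d_{k-1}$, evaluate the resulting product by the inductive hypothesis on the collapsed tuple $(0_S,a_1,\ldots,a_{k-1}+a_k)\ast(s,0,\ldots,0)$, and use the distributivity $s\cdot(a_{k-1}+a_k)=s\cdot a_{k-1}+s\cdot a_k$ of the crossed module action. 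Matching last coordinates forces the remaining entry to equal $s\cdot a_k$.

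I expect the only care to be needed in part $(i)$, where one has to distribute carefully in the expression coming from the inductive hypothesis applied to the $d_{k-1}$-collapsed tuples and then cancel the known value of $c_{k-1}$; but this is purely routine algebra in $R$, so it is not a real obstacle. The structural point is simply that $d_k$ determines the first $k-1$ coordinates of a product in $B_k$ from a product in $B_{k-1}$, and $d_{k-1}$ then pins down the last coordinate, so the inductive bookkeeping closes without further input.
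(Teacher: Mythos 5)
Your proposal is correct and follows essentially the same route as the paper: induction on $k$, using the algebra homomorphism $d_k$ to recover the first $k-1$ coordinates of the product and $d_{k-1}$ applied to the collapsed tuples to pin down the last coordinate, with the base case supplied by Lemma \ref{2.1}. The only cosmetic difference is that you explicitly invoke the ideal property of $R_k$ to justify that the product has first coordinate $0_S$, which the paper leaves implicit.
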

\begin{proof}
We prove $(i)$ by induction on $k$. For $k=1$, it is easy to see that
$$
(0_s,a_1)\ast(0_S,b_1)=(0_S,a_1b_1).
$$
Then by applying $d_k$ using induction we see that
$$
(0_S,a_1,\ldots,a_k)\ast(0_S,b_1,\ldots,b_k)=(0_S,a_1b_1,\ldots,(\sum\limits_{i=1}^{k-2}a_i)b_{k-1}+a_{k-1}\sum\limits_{i=1}^{k-1}b_i,x).
$$
Applying $d_{k-1}$ using induction once more we get that
\begin{multline*}
(0_S,a_1b_1,\ldots,(\sum\limits_{i=1}^{k-2}a_i)b_{k-1}+a_{k-1}\sum\limits_{i=1}^{k-1}b_i+x)\\
\begin{aligned}
=&(0_S,a_1,\ldots,a_{k-1}+a_k)\ast(0_S,b_1,\ldots,b_{k-1}+b_k)\\
=&(0_S, a_1b_1,\ldots,\sum\limits_{i=1}^{k-2}a_i(b_{k-1}+b_k)+(a_{k-1}+a_k)\sum\limits_{i=1}^{k}b_i)\\
=&(0_S, a_1b_1,\ldots,\sum\limits_{i=1}^{k-2}a_i(b_{k-1})+\sum\limits_{i=1}^{k-2}a_i(b_k)+a_{k-1}\sum\limits_{i=1}^{k}b_i+a_k\sum\limits_{i=1}^{k}b_i)\\
=&(0_S,a_1b_1,\ldots,\sum\limits_{i=1}^{k-2}a_i(b_{k-1})+a_{k-1}\sum\limits_{i=1}^{k-1}b_i+a_{k-1} b_k+\sum\limits_{i=1}^{k-2}a_i(b_k)+(a_k)\sum\limits_{i=1}^{k}b_i).\\
\end{aligned}
\end{multline*}
It follows that
$$
x=(\sum\limits_{i=1}^{k-1}a_i)b_k+a_k\sum\limits_{i=1}^{k}b_i.
$$

$(ii)$ By induction on $k$ similarly, we prove Part $(ii)$. For $k=1$, we have
$$
(0_S,a_1)\ast(s,0_R)=(0_S,s\cdot a_1).
$$
Applying $d_k$ using induction we see that for $k-1$
$$
(0_S,a_1,\ldots,a_k)\ast (s,0_R,\ldots, 0_R)=(0_S,s\cdot a_1, s\cdot a_2,\ldots, s\cdot a_{k-1},x).
$$
Then applying $d_{k-1}$ using induction, we get that
\begin{align*}
(0_S,s\cdot a_1,\ldots,s\cdot a_{k-1}+x)=&(0_S,a_1,\ldots,a_{k-1}+a_k)\ast(s,0_R,\ldots,0_R)\\
=&(0_S,s\cdot a_1,\ldots s\cdot (a_{k-1}+a_k))
\end{align*}
and so, $x=s\cdot a_k$.
\end{proof}

\begin{proposition}\label{3.6}
The homomorphism $l:S\rightarrow End(R)$ is an ideal structure (or a crossed module structure) on the map $\eta:R\rightarrow S$.
\end{proposition}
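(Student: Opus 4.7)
The plan is to derive the five linearity axioms (1)--(5) directly from the defining identity $(0_S,s\cdot r)=(s,0_R)\ast(0_S,r)$ in $B_1$, and then to deduce (CM1) and (CM2) from Lemmas \ref{3.3} and \ref{3.4} by realising the relevant homomorphisms as restrictions of face maps of $Bar(S,R)$. The bi-additivity axioms (2) and (3) fall out from the fact that $\ast$ distributes over $\oplus$ in $B_1$; the $\mathsf{k}$-scaling axiom (1) is likewise immediate from $\mathsf{k}$-bilinearity of $\ast$. For the associativity axiom (5) I use condition $(iv)$ in the definition of an ideal simplicial algebra structure to write $(ss',0)\ast(0,r)=((s,0)\ast(s',0))\ast(0,r)$, then apply associativity of $\ast$ so that the inner bracket yields $(0,s'\cdot r)$, after which the outer multiplication gives $(0,s\cdot(s'\cdot r))$. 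For axiom (4) I combine associativity of $\ast$ with the identity $(0,r)\ast(0,r')=(0,rr')$ from Lemma \ref{2.1}: associating $(s,0)\ast(0,r)\ast(0,r')$ to the right produces $s\cdot(rr')$, to the left produces $(s\cdot r)r'$, and commutativity of $R$ yields the symmetric identity.

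To establish (CM1) I identify the face map $d_0\colon B_1\to B_0=S$, which by definition sends $(s,r)\mapsto s^r=s+\eta(r)$, with the map $\Phi$ of Lemma \ref{3.3}. Since $d_0$ is an algebra homomorphism by hypothesis, Lemma \ref{3.3} immediately gives (CM1). For (CM2) I pass to $B_2$: by Lemma \ref{3.5}(i) with $k=2$, the ideal $R_2=\{(0_S,a,b):a,b\in R\}\subseteq B_2$ carries the multiplication $(0,a,b)\ast(0,c,d)=(0,ac,ad+bc+bd)$, so $(a,b)\mapsto(0,a,b)$ is an algebra isomorphism from $R\ltimes R$ onto $R_2$. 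Under this identification the restriction of $d_0\colon B_2\to B_1$ to $R_2$ is precisely the map $(a,b)\mapsto(\eta(a),b)$ appearing in Lemma \ref{3.4}; since $d_0$ is an algebra homomorphism, Lemma \ref{3.4} supplies (CM2).

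The main obstacle I expect is verifying that the multiplication inherited on $R_2$ really coincides with the semidirect-product multiplication on $R\ltimes R$ used in Lemma \ref{3.4}, but Lemma \ref{3.5}(i) is engineered precisely for this identification, so the step reduces to a transcription rather than a fresh calculation. Everything else is short algebraic manipulation of the defining identity using distributivity, associativity, $\mathsf{k}$-bilinearity of $\ast$, and condition $(iv)$ of the ideal simplicial algebra structure.
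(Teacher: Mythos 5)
Your handling of (CM1) and (CM2) is exactly the paper's argument: identify $d_0\colon B_1\to B_0$ with the map $\Phi$ of Lemma \ref{3.3}, and identify the restriction of $d_0\colon B_2\to B_1$ to $R_2\cong R\ltimes R$ (the isomorphism supplied by Lemma \ref{3.5}$(i)$) with the map of Lemma \ref{3.4}. Your additional derivation of the five action axioms from distributivity, associativity, $\mathsf{k}$-bilinearity and condition $(iv)$ is correct and merely supplies detail that the paper leaves implicit (it is folded into the statement of Lemma \ref{2.1}), so the proposal matches the paper's proof.
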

\begin{proof}
Since $B_1=S\ltimes R$, and since the homomorphism
$$d_0:S\ltimes R=B_1 \rightarrow B_0=S$$
is defined by $d_0(s,r)=s^r=s+\eta(r)$, Lemma \ref{3.3} implies that $(CM1)$ holds for the map $\eta:R\rightarrow S$. Notice that by Lemma \ref{3.5} the subalgebra $R_2$ is isomorphic to $R\ltimes R$. Further, the map $d_0$ restricted to $R_2$ is given by $d_0(0_S,a,b)=(\eta(a),b)$ and it is a homomorphism from $R\ltimes R$ to $S\ltimes R$ given by $(a,b)\mapsto (\eta(a),b)$. Hence by Lemma \ref{3.4}, $(CM2)$ holds for the map $\eta$.
\end{proof}

Let $(s,a_1,\ldots, a_k), (s',b_1,\ldots, b_k)\in B_k$. Then from the above results we get
\begin{align*}
(s,a_1,\ldots, a_k)\ast(s',b_1,\ldots, b_k)=&(ss',s\cdot b_1 +s'\cdot a_1+a_1b_1,s\cdot b_2+s'\cdot a_2+a_1b_2+a_2(b_1+b_2),\\
&\ldots, s\cdot b_k+s'\cdot a_k+\sum\limits_{i=1}^{k-1}a_ib_k+a_k\sum\limits_{i=1}^{k}b_i)
\end{align*}
and
$$
(s,a_1,\ldots, a_k)\oplus(s',b_1,\ldots, b_k)=(s+s',a_1+b_1,\ldots,a_k+b_k).
$$

\section{From an ideal structure on $\eta:R\rightarrow S$ to an ideal simplicial algebra structure on $Bar(S,R)$.}
In this section $S$ and $R$ are algebras and $\eta:R\rightarrow S$, $l:S\rightarrow End(R)$ are algebra homomorphism. Recall that we denote
$$
l_s:r\mapsto l_s(r)=s\cdot r
$$
for $s\in S$ and $r\in R$.

We assume that $l$ is an ideal structure or a crossed module structure on $\eta$. We let $Bar(S,R)$ denote the bar construction using the action of the $\mathsf{k}$-algebra $R$ on the underlying $\mathsf{k}$-module  $S$ of the algebra $S$ via $s\mapsto s+\eta(r)$ for all $s\in S$ and $r\in R$. Our aim in this section is to show that the crossed module structure $l$ leads to an ideal simplicial algebra structure on $Bar(S,R)$.

We start by defining a multiplication on $B_k$ for all $k\geqslant 0$. For $k=0$, $B_0=S$ and the operations are as in $S$. Obviously, from simplicial structure $Bar(S,R)$, for $k\geqslant 1$, we can denote the addition by
$$
(s,a_1,\ldots, a_k)\oplus(s',b_1,\ldots, b_k)=(s+s',a_1+b_1,\ldots, a_k+b_k).
$$
We can define the multiplication by
\begin{align*}
(s,a_1,\ldots, a_k)\ast(s',b_1,\ldots, b_k)=&(ss',s\cdot b_1 +s'\cdot a_1+a_1b_1,s\cdot b_2+s'\cdot a_2+a_1b_2+a_2(b_1+b_2),\\
&\ldots, s\cdot b_k+s'\cdot a_k+\sum\limits_{i=1}^{k-1}a_ib_k+a_k\sum\limits_{i=1}^{k}b_i)
\end{align*}
as illustrated above.

\begin{theorem}
Let $k\geqslant 0$. Then

$(i)$ $B_k$ is an algebra,

$(ii)$ the $\mathsf{k}$-module homomorphism
$$d_0:(s,a_1,\ldots, a_k)\mapsto (s+\eta(a_1),a_2,\ldots, a_k)$$
is a $\mathsf{k}$-algebra homomorphism from $B_k$ to $B_{k-1}$,

$(iii)$ the $\mathsf{k}$-module homomorphisms
$$
d_i:(s,a_1,\ldots, a_k)\mapsto (s,a_1,\ldots,a_{i-1}+a_i,\ldots, a_k)
$$
are $\mathsf{k}$-algebra homomorphisms from $B_k$ to $B_{k-1}$ for all $1\leqslant i\leqslant k-1$,

$(iv)$ the $\mathsf{k}$-module homomorphism
$$
d_k:(s,a_1,\ldots, a_k)\mapsto (s,a_1,\ldots,a_{k-1})
$$
is a $\mathsf{k}$-algebra homomorphism  from $B_k$ to $B_{k-1}$,

$(v)$ the $\mathsf{k}$-module homomorphisms
$$
s_i:(s,a_1,\ldots, a_k)\mapsto (s,a_1,\ldots,a_i,0,a_{i+1},\ldots, a_k)
$$
are $\mathsf{k}$-algebra homomorphisms for all $0\leqslant i\leqslant k$.
\end{theorem}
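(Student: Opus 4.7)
The plan is to treat all five parts by direct computation from the explicit formulas for $\oplus$, $\ast$, $d_i$, and $s_i$, relying on the five action axioms for $l$ together with $(CM1)$ and $(CM2)$. I would prove $(i)$ first so that $B_k$ is genuinely a $\mathsf{k}$-algebra, then verify each face and degeneracy homomorphism coordinate by coordinate.

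For $(i)$, commutativity of $\ast$ follows from commutativity of $R$ and $S$ together with axiom 4 (the symmetry $s\cdot(rr') = (s\cdot r)r' = r(s\cdot r')$), and $\mathsf{k}$-bilinearity is a consequence of axiom 1. Distributivity of $\ast$ over $\oplus$ reduces to biadditivity of the action (axioms 2 and 3) together with the distributive laws in $R$ and $S$. Associativity is the longest verification: comparing $((s,a)\ast(s',b))\ast(s'',c)$ with $(s,a)\ast((s',b)\ast(s'',c))$, the $0$-th coordinate matches immediately, and on the $j$-th coordinate the key rewrites are $(ss')\cdot c_j = s\cdot(s'\cdot c_j)$ (axiom 5) and $s\cdot(r_1 r_2)=(s\cdot r_1)r_2=r_1(s\cdot r_2)$ (axiom 4), after which the cross-terms collect cleanly using commutativity of $R$.

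Parts $(iii)$, $(iv)$, $(v)$ are largely mechanical bookkeeping. For $(iv)$, the product formula on $B_k$ restricted to its first $k-1$ $R$-coordinates is precisely the product formula on $B_{k-1}$, so dropping the last coordinate is automatically multiplicative. For $(iii)$, adding the $(i-1)$th and $i$th coordinates of the product equals the $(i-1)$th coordinate of the product formed after first collapsing $a_{i-1}+a_i$ and $b_{i-1}+b_i$, which amounts to biadditivity of the action and of multiplication in $R$. For $(v)$, inserting a zero at position $i$ yields only zero summands in the new slot and leaves every other coordinate unchanged.

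The main obstacle is $(ii)$, where both $(CM1)$ and $(CM2)$ enter essentially. On the $0$-th coordinate one must check that $(s+\eta(a_1))(s'+\eta(b_1)) = ss' + \eta(s\cdot b_1 + s'\cdot a_1 + a_1 b_1)$; expanding the left side and applying $(CM1)$ to rewrite $\eta(s\cdot b_1)=s\eta(b_1)$ and $\eta(s'\cdot a_1)=s'\eta(a_1)$, while using $\eta(a_1 b_1)=\eta(a_1)\eta(b_1)$ because $\eta$ is an algebra map, the two sides agree. On the $j$-th coordinate for $j\geqslant 1$ one expands $(s+\eta(a_1))\cdot b_{j+1}=s\cdot b_{j+1}+\eta(a_1)\cdot b_{j+1}$ by axiom 3 and then converts the second summand to $a_1 b_{j+1}$ via $(CM2)$, and symmetrically for $(s'+\eta(b_1))\cdot a_{j+1}$. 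After collecting and using commutativity of $R$, the result reproduces the $(j+1)$th coordinate of the original product in $B_k$. In essence this is the level-$k$ generalization of Lemmas \ref{3.3} and \ref{3.4}, which handle the base cases $B_1$ and a canonical subalgebra of $B_2$ respectively.
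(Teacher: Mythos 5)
Your proposal is correct, and parts $(ii)$--$(v)$ follow the paper's own route almost exactly: the same coordinatewise computation for $d_0$ with $(CM1)$ entering on the $S$-coordinate and $(CM2)$ converting $\eta(a_1)\cdot b_j$ into $a_1b_j$ on the later coordinates, the same biadditivity bookkeeping for the inner faces $d_i$, and the same observation that $d_k$ is the first-coordinate projection of a semidirect product. The genuine divergence is in part $(i)$. The paper does not verify the algebra axioms for $\ast$ directly; it argues by induction on $k$, exhibiting $B_k$ as the semidirect product $B_{k-1}\ltimes R$ with respect to the action $(s,a_1,\ldots,a_{k-1}):a\mapsto a\cdot\bigl(s+\eta(a_1+\cdots+a_{k-1})\bigr)$, where the auxiliary augmentation $\eta_k:(s,a_1,\ldots,a_k)\mapsto s+\eta(a_1+\cdots+a_k)$ is shown to be an algebra homomorphism at each stage (the base case being Lemma \ref{3.3}). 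This makes associativity, distributivity and $\mathsf{k}$-bilinearity of $\ast$ automatic from the general theory of semidirect products, at the cost of carrying $\eta_k$ through the induction. You instead check the ring axioms, in particular associativity, directly from the explicit product formula using axioms $4$ and $5$ of the action and commutativity of $R$ and $S$; this works (the cross-terms do collect as you claim), is more self-contained, and needs no auxiliary map, but the associativity verification on a general coordinate is substantially longer than anything the paper writes down, so you should be prepared to carry it out in full rather than assert that the terms ``collect cleanly.'' One small correction: commutativity of $\ast$ needs only commutativity of $R$ and $S$ (the quadratic terms $\bigl(\sum_{i=1}^{j-1}a_i\bigr)b_j+a_j\sum_{i=1}^{j}b_i$ are symmetric under swapping $a\leftrightarrow b$ already in a commutative $R$), not axiom $4$.
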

\begin{proof}
$(i)$ For each $k\geqslant 1$ define
$$
\eta_k:(s,a_1,\ldots, a_k)\mapsto s+\eta(a_1+\ldots+a_k)
$$
from $B_k$ to $S$. We prove that $B_k$ is an algebra and that $\eta_k$ is an algebra homomorphism. For $k=1$, this is Lemma \ref{3.3}. Suppose this holds for $k-1$. Then $B_{k-1}$ acts on $R$ via
$$
(s,a_1,\ldots, a_{k-1}):a\mapsto a\cdot (s+\eta(a_1+\ldots+a_{k-1}))
$$
for $(s,a_1,\ldots, a_{k-1})\in B_{k-1}$ and $a\in R$. Notice that $B_k$ is just the semi-direct product algebra $B_{k-1}\ltimes R$ with respect to this action, so $B_k$ is an algebra. To show that $\eta_k$ is an algebra homomorphism we obtain
\begin{multline*}
\eta_k((s,a_1,\ldots, a_k)\ast(s',b_1,\ldots, b_k))\\
\begin{aligned}
=&\eta_k(ss',s\cdot b_1 +s'\cdot a_1+a_1b_1,s\cdot b_2+s'\cdot a_2+a_1b_2+a_2(b_1+b_2),\\
&\ \ \ldots, s\cdot b_k+s'\cdot a_k+\sum\limits_{i=1}^{k-1}a_i b_k+a_k\sum\limits_{i=1}^{k}b_i)\\
=&ss'+\eta(s\cdot b_1 +s'\cdot a_1+a_1b_1+s\cdot b_2+s'\cdot a_2+a_1b_2+a_2(b_1+b_2)+\\
&\ \ \ldots+ s\cdot b_k+s'\cdot a_k+\sum\limits_{i=1}^{k-1}a_i b_k+a_k\sum\limits_{i=1}^{k}b_i)\\
=&ss'+s\eta(b_1)+s'\eta(a_1)+\eta(a_1)\eta(b_1)+s\eta(b_2)+s'\eta(a_2)+\eta(a_1b_2+a_2(b_1+b_2))\\
&\ \ \ldots+ s\eta(b_k)+s'\eta(a_k)+\sum\limits_{i=1}^{k-1}\eta(a_i)\eta(b_k)+\eta(a_k)\sum\limits_{i=1}^{k}\eta(b_i)\\
=&s(s'+\sum\limits_{i=1}^{k}\eta(b_i))+(\sum\limits_{i=1}^{k}\eta(a_i))(s'+\sum\limits_{i=1}^{k}\eta(b_i))\\
=&(s+\eta(a_1+\ldots+a_k))(s'+\eta(b_1+\ldots+b_k))\\
=&\eta_k(s,a_1,\ldots, a_k)\eta_k(s',b_1,\ldots, b_k).
\end{aligned}
\end{multline*}

$(ii)$ Let
$$
u=(s,a_1,\ldots,a_k),v=(s',b_1,\ldots,b_k)\in B_k.
$$
Then we obtain
\begin{align*}
d_0(u\ast v)=&d_0(ss',s\cdot b_1 +s'\cdot a_1+a_1b_1,s\cdot b_2+s'\cdot a_2+a_1b_2+a_2(b_1+b_2),\\
&\ \ \ldots, s\cdot b_k+s'\cdot a_k+\sum\limits_{i=1}^{k-1}a_i b_k+a_k\sum\limits_{i=1}^{k}b_i)\\
=&(ss'+s\eta(b_1)+s'\eta(a_1)+\eta(a_1)\eta(b_1),s\cdot b_2+s'\cdot a_2+a_1b_2+a_2(b_1+b_2),\\
&\ \ \ldots, s\cdot b_k+s'\cdot a_k+\sum\limits_{i=1}^{k-1}a_i b_k+a_k\sum\limits_{i=1}^{k}b_i)\\
=&((s+\eta(a_1)(s'+\eta(b_1)),s\cdot b_2+s'\cdot a_2+a_1b_2+a_2(b_1+b_2),\\
&\ \ \ldots, s\cdot b_k+s'\cdot a_k+\sum\limits_{i=1}^{k-1}a_i b_k+a_k\sum\limits_{i=1}^{k}b_i)\\
=&(s+\eta a_1,a_2,\ldots,a_k)(s'+\eta b_1,b_2,\ldots,b_k)\\
=&d_0(u)\ast d_0(v).
\end{align*}

$(iii)$ Let
$$
u=(s,a_1,\ldots,a_k),v=(s',b_1,\ldots,b_k)\in B_k.
$$
We shall show that the $\mathsf{k}$-module homomorphisms $d_i$ are $\mathsf{k}$-algebra homomorphisms for $0\leqslant i\leqslant k-1$. We calculate
\begin{align*}
d_i(u\ast v)=&d_i(ss',s\cdot b_1 +s'\cdot a_1+a_1b_1,s\cdot b_2+s'\cdot a_2+a_1b_2+a_2(b_1+b_2),\\
&\ \ \ldots, s\cdot b_k+s'\cdot a_k+\sum\limits_{i=1}^{k-1}a_i b_k+a_k\sum\limits_{i=1}^{k}b_i)\\
=&(ss',s\cdot b_1 +s'\cdot a_1+a_1b_1,s\cdot b_2+s'\cdot a_2+a_1b_2+a_2(b_1+b_2),\\
&\ldots, s\cdot b_{i-1}+s'\cdot a_{i-1}+b_{i-1}\sum\limits_{j=1}^{i-2}a_j +a_{i-1}\sum\limits_{j=1}^{i-1}b_j\\
&+s\cdot b_{i}+s'\cdot a_{i}+b_{i}\sum\limits_{j=1}^{i-1}a_j +a_{i}\sum\limits_{j=1}^{i}b_j,\\
&\ \ \ldots, s\cdot b_k+s'\cdot a_k+\sum\limits_{i=1}^{k-1}a_i b_k+a_k\sum\limits_{i=1}^{k}b_i)\\
=&(ss',s\cdot b_1 +s'\cdot a_1+a_1b_1,\ldots,s'\cdot(a_{i-1}+a_i)+s\cdot(b_{i-1}+b_i)\\
&\ \ +(b_{i-1}+b_i)\sum\limits_{j=1}^{i-2}a_j+(a_{i-1}+a_i)\sum\limits_{j=1}^{i-1}b_j+(a_{i-1}+a_i)b_i,\\
&\qquad \ldots, s\cdot b_k+s'\cdot a_k+\sum\limits_{i=1}^{k-1}a_i b_k+a_k\sum\limits_{i=1}^{k}b_i)\\
=&(s,a_1,\ldots,a_{i-1}+a_i,\ldots, a_k)(s',b_1,\ldots,b_{i-1}+b_i,\ldots, b_k)\\
=&d_i(u)\ast d_i(v)
\end{align*}
for $0\leqslant i\leqslant k-1$, so Part $(iii)$ holds.

$(iv)$ Since in any semi-direct product, projection onto the first coordinate is a homomorphism, the projection map
$$
d_k:(s,a_1,\ldots, a_k)\mapsto (s,a_1,\ldots,a_{k-1})
$$
is a homomorphism  from $B_k$ to $B_{k-1}$ for $k\geqslant 1$.

$(v)$ We leave it to the reader.
\end{proof}
\section{The mutual inverse  relation between  above associations}
Let $\eta:R\rightarrow S$ be an algebra homomorphism. We showed that how to start with an ideal simplicial algebra structure on $Bar(S,R)$ and obtain a crossed module structure $l:S\rightarrow End(R)$ on $\eta$ and we showed how to start with a crossed module structure on $\eta$ and obtain an ideal simplicial algebra structure on the simplicial $\mathsf{k}$-module $Bar(S,R)$. Our aim in this section is to make the observation that these two associations are mutual inverses.

Assume first that the simplicial $\mathsf{k}$-module $Bar(S,R)$ is  endowed with an ideal simplicial algebra structure, and denote the multiplication in $B_k$ as
$$
(s,a_1,\ldots, a_k)\ast (s',b_1,\ldots, b_k).
$$
We showed that the action $l:S\rightarrow End(R)$ given by $l_s:r\mapsto s\cdot r$  gives an crossed module structure on $\eta$. Further given this crossed module structure on $\eta$ the equation
\begin{align*}
(s,a_1,\ldots, a_k)\ast(s',b_1,\ldots, b_k)=&(ss',s\cdot b_1 +s'\cdot a_1+a_1b_1,s\cdot b_2+s'\cdot a_2+a_1b_2+a_2(b_1+b_2),\\
&\ldots, s\cdot b_k+s'\cdot a_k+\sum\limits_{i=1}^{k-1}a_ib_k+a_k\sum\limits_{i=1}^{k}b_i).
\end{align*}
given above tells us how to define an ideal simplicial algebra structure on $B_k$ with the multiplication `$\ast$'.

Conversely let $l:S\rightarrow End(R)$ be an ideal structure (or a crossed module structure) on $\eta$. Let  `$\ast$' be the multiplication  in $B_k$ as given above. Let $l':S\rightarrow End(R)$ be the crossed module structure on $\eta$.  That is for all $s\in S$, $l'_s:r\mapsto s'$ where $(0,s')=(s,0)\ast (0,r)$. Now by definition of the operation $\ast$, we obtain
$$
(s,0)\ast(0,r)=(0s,s\cdot r+0.0+0\cdot r)=(0,s\cdot r).
$$
We thus see that $s'=s\cdot r$ for all $r\in R$, $s\in S$, that is $l'_s=l_s$ for all $s\in S$. This completes the observation that the two associations are mutual inverses.
\section{Crossed ideal maps between ideal maps}
 As it is well known and explored above that  an ideal structure over a homomorphism of algebras $\eta:R\rightarrow S$ preserves the ideals. So we can say that  if there is an ideal structure  over $\eta$, then $\eta(R)$ is an ideal of $S$. This ideal approach to crossed modules shades some light on Loday's crossed square (cf. \cite{walery}). In this section we will give an extension of this result for higher dimensional crossed modules. We will prove that if there is a (crossed) ideal structure over a morphism between crossed modules, then this map preserves the (crossed) ideals.  First we give the notion of  `crossed ideal' of a crossed module of algebras from \cite{nizar}.
\begin{definition}
A homomorphism of algebras $\eta':R'\rightarrow S'$  will be called a \textit{crossed ideal} of the crossed module $\eta:R\rightarrow S$ in the category of crossed modules over $k$-algebras if:

$\mathfrak{CI}1:$ $\eta':R'\rightarrow S'$ is a subcrossed module of $\eta:R\rightarrow S$, that is, the following conditions are satisfied:

\ \  \ $(i)$ $R'$ is a subalgebra of  $R$ and  $S'$ is a subalgebra of $S$.

\ \ \ $(ii)$ the action of $S'$ on $R'$ induced by the action of $S$ on $R$.

\ \ \ $(iii)$ $\eta':R'\rightarrow S'$ is a crossed module.

\ \ \ $(iv)$ the following diagram of morphisms of crossed modules
\begin{equation*}
\xymatrix{ R' \ar[d]_{\eta'} \ar[r]^{\mu} & R \ar[d]^{\eta}
\\ S' \ar[r]_{\nu} & S}
\end{equation*}%
commutes, where $\mu$ and $\nu$ are the inclusions,

$\mathfrak{CI}2:$ $R'R=RR'\subseteq R'$ and $SS'=S'S\subseteq S'$,

$\mathfrak{CI}3:$ $R\cdot S'=S'\cdot R \subseteq R'$,

$\mathfrak{CI}4:$ $R'$ is closed under the action of $S$, i.e. $S\cdot R'=R'\cdot S \subseteq R'$.
\end{definition}

\subsection{Crossed ideal structure over maps between crossed modules}

 Assume that $\eta_1:R_1\rightarrow S_1$ and $\eta_2:R_2\rightarrow S_2$ are crossed modules. Let $\alpha:(\alpha_1,\alpha_2)$ be a morphism from $\eta_1$ to $\eta_2$ in the category of crossed modules of $k$-algebras, where $\alpha_1:R_1\rightarrow R_2$ and $\alpha_2:S_1\rightarrow S_2$ are  homomorphisms of $k$-algebras. In this case, the morphism $\alpha:=(\alpha_1,\alpha_2)$ satisfies the following conditions:

$(i)$ the  diagram
$$
\xymatrix{ R_1 \ar[d]_{\eta_1} \ar[r]^{\alpha_1} & R_2 \ar[d]^{\eta_2}
\\ S_1 \ar[r]_{\alpha_2} & S_2}
$$
commutes, i.e. $\alpha_2\eta_1=\eta_2\alpha_1$,

$(ii)$ for all $s_1\in S_1$ and $r_1\in R_1$,
$$
\alpha_1(l_{s_1}(r_1))=l_{\alpha_2(s_1)}(\alpha_1(r_1)).
$$

\begin{definition} A morphism $\alpha:=(\alpha_1,\alpha_2)$ between crossed modules $\eta_1$ and $\eta_2$ is called a `crossed ideal map' if

$(i)$ there are ideal map structures over the homomorphisms $\alpha_1,\alpha_2$ and $\eta_2\alpha_1=\alpha_2\eta_1$, and

$(ii)$ there is an $S_2$-bilinear map $h:R_2\times S_1\rightarrow R_1$ satisfying the conditions:

\ \ \ $(a)$ $\alpha_1(h(r_2,s_1))=l_{\alpha_2(s_1)}(r_2)$,

\ \ \ $(b)$ $  \eta_1(h(r_2,s_1))=l_{\eta_2(r_2)}(s_1)$,

\ \ \ $(c)$ $h(\alpha_1(r_1),s_1)=l_{s_1}(r_1)$,

\ \ \ $(d)$ $h(r_2,\eta_1(r_1))=l_{r_2}(r_1).$\\
for all $r_2\in R_2,s_1\in S_1$.
\end{definition}
\begin{remark}\rm{}
In fact,  a crossed ideal structure over the map $\alpha$, between crossed modules $\eta_1$ and $\eta_2$, gives  a crossed square structure  of algebras on the
square
$$
\xymatrix{ R_1 \ar[d]_{\eta_1} \ar[r]^{\alpha_1} & R_2 \ar[d]^{\eta_2}
\\ S_1 \ar[r]_{\alpha_2} & S_2}
$$
defined by Ellis \cite{ellis} and  introduced by Guin-Wal\'{e}ry and Loday, \cite{walery}, in the group case.
\end{remark}
Thus we get the following result.

\begin{proposition}\rm{}
If the morphism $\alpha:(\alpha_1,\alpha_2)$ is a crossed  ideal map from  $\eta_1:R_1\rightarrow S_1$ to  $\eta_2:R_2\rightarrow S_2$ in the category of crossed modules of $k$-algebras, then $\alpha(\eta_1):\alpha_1(R_1)\rightarrow \alpha_2(S_1)$ is a crossed ideal of the crossed module $\eta_2:R_2\rightarrow S_2$.
\end{proposition}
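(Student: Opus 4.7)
The plan is to verify the four crossed ideal axioms $\mathfrak{CI}1$--$\mathfrak{CI}4$ for the homomorphism $\eta':=\eta_2|_{R'}\colon R'\longrightarrow S'$, where I set $R':=\alpha_1(R_1)$ and $S':=\alpha_2(S_1)$. First I observe that $\eta'$ really does land in $S'$, because the commutativity $\eta_2\alpha_1=\alpha_2\eta_1$ recorded in condition~(i) of the definition gives $\eta_2(\alpha_1(r_1))=\alpha_2(\eta_1(r_1))\in S'$ for every $r_1\in R_1$.

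For $\mathfrak{CI}1$: the subalgebra statements are immediate since $R'$ and $S'$ are images of algebra homomorphisms, and the inclusion square commutes tautologically. The ambient $S_2$-action on $R_2$ restricts to an $S'$-action on $R'$: for $s_1\in S_1$ and $r_1\in R_1$, conditions (a) and (c) of the crossed ideal map combine to give
\[
l_{\alpha_2(s_1)}(\alpha_1(r_1))=\alpha_1(h(\alpha_1(r_1),s_1))=\alpha_1(l_{s_1}(r_1))\in R'.
\]
With this in hand, $(CM1)$ and $(CM2)$ for $\eta'$ follow by transferring the corresponding axioms of $\eta_1$ and $\eta_2$ along $\alpha_1,\alpha_2$, using $\eta_2\alpha_1=\alpha_2\eta_1$ throughout.

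For $\mathfrak{CI}2$, I would apply Remark~\ref{2.5} to the two ideal maps $\alpha_1,\alpha_2$ supplied by part~(i) of the crossed ideal map definition: the image of an ideal map is an ideal of its target, so $R'$ is an ideal of $R_2$ and $S'$ is an ideal of $S_2$, which is exactly the content of $\mathfrak{CI}2$. For $\mathfrak{CI}3$, condition~(a) directly asserts that the $S_2$-action of any element of $S'$ on any element of $R_2$ lifts through $\alpha_1$:
\[
l_{\alpha_2(s_1)}(r_2)=\alpha_1(h(r_2,s_1))\in R',
\]
so $S'\cdot R_2\subseteq R'$, giving $\mathfrak{CI}3$.

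The main obstacle is $\mathfrak{CI}4$: here I need $l_{s_2}(\alpha_1(r_1))\in R'$ for every $s_2\in S_2$, not merely for $s_2\in S'$. My plan is to exploit the crossed square interpretation recorded in the remark preceding the proposition: the $S_2$-bilinearity of $h$, together with (a)--(d) and the crossed module axioms for $\eta_2$ and for the ideal maps $\alpha_1,\alpha_2$, endows $R_1$ with an $S_2$-action $\triangleright$ along which $\alpha_1$ becomes equivariant, so that $l_{s_2}(\alpha_1(r_1))=\alpha_1(s_2\triangleright r_1)\in R'$. The delicate step is extracting this $S_2$-equivariance of $\alpha_1$ cleanly from conditions (a)--(d) alone; this is where the bulk of the bookkeeping lies, and it is the one point that cannot be reduced to a short citation of Remark~\ref{2.5} or of any one of the $h$-axioms in isolation.
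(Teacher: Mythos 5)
Your treatment of $\mathfrak{CI}1$--$\mathfrak{CI}3$ matches the paper's proof step for step: the subcrossed module is obtained by restricting $\eta_2$ to the images, $\mathfrak{CI}2$ comes from Remark \ref{2.5} applied to the ideal structures on $\alpha_1$ and $\alpha_2$, and $\mathfrak{CI}3$ comes from axiom $(a)$ of the $h$-map. (Your derivation of the induced $S'$-action on $R'$ by combining $(a)$ and $(c)$ is a harmless variant of the paper's, which instead quotes the crossed module morphism condition $\alpha_1(l_{s_1}(r_1))=l_{\alpha_2(s_1)}(\alpha_1(r_1))$ directly.)

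The genuine gap is $\mathfrak{CI}4$, which you have correctly located but not closed. You need $l_{s_2}(\alpha_1(r_1))\in\alpha_1(R_1)$ for \emph{every} $s_2\in S_2$, and your plan is to extract an $S_2$-action $\triangleright$ on $R_1$, with $\alpha_1$ equivariant, ``from conditions $(a)$--$(d)$ alone''. This cannot succeed as stated: each of $(a)$--$(d)$ only ever has elements of $S_1$, of $\eta_1(R_1)$, of $\eta_2(R_2)$ or of $R_2$ acting, never a general element of $S_2$, so no amount of bookkeeping with those four identities produces a statement about an arbitrary $s_2$. The missing ingredient is the $S_2$-module structure on $R_1$ that is already presupposed by the hypothesis that $h$ is \emph{$S_2$-bilinear}, together with the $S_2$-equivariance of $\alpha_1$ (part of the crossed-square data alluded to in the remark preceding the proposition); once that is invoked, $\mathfrak{CI}4$ is the one-line computation $l_{s_2}(\alpha_1(r_1))=\alpha_1(s_2\triangleright r_1)\in\alpha_1(R_1)$. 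For what it is worth, the paper's own proof is equally terse at this point: it writes $l_{s_2}(\alpha_1(r_1))=\alpha_1(l_{s_1}(r_1))$ for an unspecified $s_1$, which literally only covers $s_2\in\alpha_2(S_1)$. So the step you flagged is precisely the one the paper also leaves implicit, but a complete argument must take the $S_2$-action on $R_1$ and the equivariance of $\alpha_1$ as part of the hypotheses rather than attempt to derive them from $(a)$--$(d)$.
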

\begin{proof}
First, we consider the following  square
$$
\xymatrix{ \alpha_1(R_1)=R'_1 \ar[d]_{\overline{\eta_2}} \ar[r]^-{\mu} & R_2 \ar[d]^{\eta_2}
\\ \alpha_2(S_1)=S'_1 \ar[r]_-{\nu} & S_2}
$$
where $\mu$ and $\nu$ are the inclusions. The map $\overline{\eta_2}:R'_1\rightarrow S'_1$ is defined by the restriction of the map $\eta_2$ to the subalgebra $\alpha_1(R_1)$ of $R_2$. We will show that the restricted homomorphism $\overline{\eta_2}$ is a crossed ideal of $\eta_2$.

$\mathfrak{CI}1.$ First we will show that $\overline{\eta_2}$ is a subcrossed module of $\eta_2$.

\ \ \ $(i)$ It is clear that $R'_1$ is a subalgebra of $R_2$ and similarly $\alpha_2(S_1)=S_1'$ is a subalgebra of $S_2$.

\  \  \ $(ii)$ Since the map $\alpha:=(\alpha_1,\alpha_2)$ is a crossed module morphism, it satisfies the condition
$l_{\alpha_2(s_1)}(\alpha_1 r_1)=\alpha_1(l_{s_1}(r_1))$ for all $r_1\in R_1$ and $s_1\in S_1$. Then the algebra action of $\alpha_2(s_1)\in S'_1$ on
$\alpha_1(r_1)\in R'_1$ can be given by $\alpha_2(s_1)\cdot \alpha_1(r_1)=\alpha_1(s_1\cdot r_1)\in R'_1$.

\ \ \ $(iii)$ We will show that $\overline{\eta_2}:R'_1\rightarrow S'_1$ is a crossed module. For all $\alpha_2(s_1)\in S'_1$ and $\alpha_1(r_1), \alpha_1(r'_1)\in R'_1$,  we obtain
\begin{align*}
\overline{\eta_2}(l_{\alpha_2(s_1)}(\alpha_1(r_1)))&=\eta_2\alpha_1(l_{s_1}(r_1))\\
&=\alpha_2\eta_1(l_{s_1}(r_1))\\
&=\alpha_2(s_1\eta_1(r_1)) \ \ (\text{ since } \eta_1 \text{ is a crossed module })\\
&=\alpha_2(s_1)\alpha_2\eta_1(r_1)\\
&=\alpha_2(s_1)\eta_2\alpha_1(r_1)\\
&=\alpha_2(s_1)\overline{\eta_2}(\alpha_1(r_1)),
\intertext{and}
l_{\overline{\eta_2}(\alpha_1(r_1))}\alpha_1(r'_1)&=l_{\alpha_2(\eta_1(r_1))}\alpha_1(r'_1)\\
&=\alpha_1(l_{\eta_1(r_1)}(r'_1))\\
&=\alpha_1(r_1r'_1)\ \ (\text{ since } \eta_1 \text{ is a crossed module })\\
&=\alpha_1(r_1)\alpha_1(r'_1).
\end{align*}

\ \ \ $(iv)$ the square
$$
\xymatrix{ R'_1 \ar[d]_{\overline{\eta_2}} \ar[r]^-{\mu} & R_2 \ar[d]^{\eta_2}
\\ S'_1\ar[r]_-{\nu} & S_2}
$$
is commutative, because $\mu $ and $\nu$ are the inclusions and $\overline{\eta_2}$ is given by the restriction of $\eta_2$. Thus $\overline{\eta_2}$ is a subcrossed module of $\eta_2$.

$\mathfrak{CI}2.$ Since there are ideal structures over the maps $\alpha_1:R_1\rightarrow R_2$ and $\alpha_2:S_1\rightarrow S_2$, we obtain that
$\alpha_1(R_1)=R'_1$ and $\alpha_2(S_1)=S'_1$ are ideals of $R_2$ and $S_2$ respectively. Therefore, we obtain
$$
R'_1R_2=R_2R'_1\subseteq R'_1 \text{ and } S'_1S_2=S_2S'_1\subseteq S'_1.
$$

$\mathfrak{CI}3.$ We have to show that $R_2\cdot S'_1=S'_1\cdot R_2\subseteq R'_1$. We use the $h$-map  to prove it. For all $\alpha_2(s_1)\in S'_1$ and $r_2\in R_2$ we have $r_2\cdot \alpha_2(s_1)=\alpha_2(s_1)\cdot r_2=l_{\alpha_2(s_1)}(r_2)=\alpha_1(h(r_2,s_1))$, where $h(r_2,s_1)\in R_1$, then we obtain  $r_2\cdot \alpha_2(s_1)=\alpha_2(s_1)\cdot r_2\in \alpha_1(R_1)=R'_1$ so that $R_2\cdot S'_1=S'_1\cdot R_2\subseteq R'_1$.

$\mathfrak{CI}4.$ We have to show that $S_2\cdot R'_1=R'_1\cdot S_2\subseteq R'_1$. For all $s_2\in S_2$ and $\alpha_1(r_1)\in R'_1$, we can define the action
by $s_2\cdot \alpha_1(r_1)=l_{s_2}(\alpha_1(r_1))=\alpha_1(l_{s_1}(r_1))\in R'_1$. Thus $R'_1$ is closed under the action of $S_2$ and this completes the proof.
\end{proof}

Conversely, as it was illustrated in \cite{nizar}, we can easily say that given  a crossed ideal $\overline{\eta_2}:R'_1\rightarrow S'_1$ of the crossed module $\eta_2:R_2\rightarrow S_2$, then inclusion morphism from $\overline{\eta_2}$ to $\eta_2$ is  a crossed ideal map in the category of crossed modules of $k$-algebras.

Indeed, in the following diagram
$$
\xymatrix{ R'_1 \ar[d]_{\overline{\eta_2}} \ar[r]^-{\mu} & R_2 \ar[d]^{\eta_2}
\\ S'_1 \ar[r]_-{\nu} & S_2}
$$
if $\overline{\eta_2}$ is a crossed ideal of $\eta_2$, the inclusion morphisms $\mu$ and $\nu$ become crossed modules with the natural actions of $R_2$ and $S_2$ on their ideals $R'_1$ and $S'_1$ given by the multiplication, respectively. Further, the $h$-map $h:R_2\times S'_1\rightarrow R'_1$ is defined by $h(r_2,s'_1)=(l|_{S'_1})_{s'_1}(r_2)$, where $l|_{S'_1}$ is the restriction of $l:S_2\rightarrow End(R_2)$ to $S'_1$.

\section{From the morphism $\alpha:\eta_1\rightarrow \eta_2$ to the usual Bar construction}

As mentioned above, in \cite{far1}, Farjoun and Segev proved that a crossed module map $l:G\rightarrow Aut(N)$, which they call a normal structure on the map $N\rightarrow G$ is inversely associated with a group structure on the homotopy quotient $G//N:=hocomlim_{N}G$ by taking  $G//N$ to be the usual Bar construction.  They also stated in section 6 of their work, for a morphism from a normal map $X\rightarrow G$ to a normal map $Y\rightarrow H$ in the category of normal maps, one can form a simplicial group morphism
$X//G \rightarrow Y//H$ and the homotopy quotient $(Y//H)//(X//G)$.  In fact, if there is a normal map structure over the simplicial group morphism $X//G \rightarrow Y//H$, then $(Y//H)//(X//G)$ becomes a \textit{bisimplicial} group, algebraically. In this section, we  make some remarks concerning these ideas over $k$-algebras.

Recall that a  functor $\mathbf{E.,.}:(\Delta \times \Delta)^{op}\rightarrow \mathbf{Alg}$ is called a bisimplicial algebra, where $\Delta$ is the category of
finite ordinals and $\mathbf{Alg}$ is the category of (commutative) $k$-algebras.
 Hence $\mathbf{E.,.}$ is equivalent to giving for each $(p,q)$ an algebra  $E_{p,q}$ and morphisms:
\begin{align*}
d_{i}^{h^{(pq)}}:E_{p,q}\rightarrow E_{p-1,q} &  \\
s_{i}^{h^{(pq)}}:E_{p,q}\rightarrow E_{p+1,q} & \quad 0\leq i\leq p \\
d_{j}^{v^{(pq)}}:E_{p,q}\rightarrow E_{p,q-1} &  \\
s_{j}^{v^{(pq)}}:E_{p,q}\rightarrow E_{p,q+1} & \quad 0\leq j\leq q
\end{align*}
such that the maps $d_{i}^{h^{(pq)}},s_{i}^{h^{(pq)}}$ commute with $d_{j}^{v^{(pq)}},s_{j}^{v^{(pq)}}$
and that $d_{i}^{h^{(pq)}},s_{i}^{h^{(pq)}}$ (resp. $d_{j}^{v^{(pq)}},s_{j}^{v^{(pq)}}$) satisfy the
usual simplicial identities.

 Now we suppose that  $\alpha:(\alpha_1,\alpha_2)$ is a morphism from  $\eta_1:R_1\rightarrow S_1$ to  $\eta_2:R_2\rightarrow S_2$ in the category of crossed modules of $k$-algebras. Using the usual Bar construction, we can form the simplicial algebras $S_1//R_1$ and $S_2//R_2$ from  $\eta_1$ and $\eta_2$ respectively as above. Analogously to \cite{far1}, we obtain a simplicial algebra morphism
$$
\Phi:S_1//R_1 \rightarrow S_2//R_2
$$
and we can define this map on each step by
$$
\Phi_n:(S_1\ltimes (R_1)^{\ltimes^n}) \rightarrow (S_2\ltimes (R_2)^{\ltimes^n} )
$$
with
 $$\Phi_n:(s_1,r_1,r_2,\dots,r_n)=(\alpha_2(s_1),\alpha_1(r_1),\alpha_1(r_2),\dots,\alpha_1(r_n))$$
for all $s_1\in S_1$ and $r_i\in R_1$ and where the maps $\Phi_n$ are homomorphisms of algebras.

An action of the algebra $(S_1\ltimes (R_1)^{\ltimes^n})$ on
the underlying $k$-module of the algebra $(S_2\ltimes (R_2)^{\ltimes^n})$ can be given by this map, namely,
$$
(s_1,\ltimes_{i=1}^{n}(r_i)):(s_2,\ltimes_{i=1}^{n}(r'_i))=(s_2+\alpha_1(s_1),\ltimes_{i=1}^{n}(r'_i+\alpha_1(r_i)))
$$
where $s_1\in S_1, s_2\in S_1$ and $r_i\in R_1$, $r'_i\in R_2$ for $i=1,2,\dots,n$.

Using this action, on each step and considering the usual Bar construction, we can form a \textit{bisimplicial} $k$-module,
$$
\mathfrak{Bar}^{(2)}:(S_2//R_2)//(S_1//R_1)
$$
and, on each directions, this can be defined by the $k$-modules
$$
\mathfrak{Bar}^{(2)}_{n,m}:=(S_2\ltimes (R_2)^{\ltimes^n})\times (S_1\ltimes (R_1)^{\ltimes^n})^{\times^m}.
$$

The horizontal  homomorphisms between these $k$-modules can be defined as follows:

1. For all
$$
(s_2,r_{21},\cdots,r_{2n})\in S_2\ltimes (R_2)^{\ltimes^n}
$$
and
$$((s_1^1,r_{11}^1,\cdots,r_{1n}^1),\cdots,(s_1^m,r_{11}^m,\cdots,r_{1n}^m) )\in (S_1\ltimes (R_1)^{\ltimes^n})^{\times^m},$$
where, for $1\leqslant i \leqslant n$ and $1\leqslant j \leqslant m$, $r_{1i}^j\in R_1$,  $r_{2i}\in R_2$, $s_2\in S_2$, $s_1^j\in S_1$, the $d_0^h:\mathfrak{Bar}^{(2)}_{n,m}\rightarrow \mathfrak{Bar}^{(2)}_{n,m-1}$ is defined by
\begin{multline*}
d_0^h((s_2,r_{21},\cdots,r_{2n}),(s_1^1,r_{11}^1,\cdots,r_{1n}^1),\cdots,(s_1^m,r_{11}^m,\cdots,r_{1n}^m) )\\
\begin{aligned}
&=\left((s_2,r_{21},\cdots,r_{2n})+\Phi_n(s_1^1,r_{11}^1,\cdots,r_{1n}^1), (s_1^2,r_{11}^2,\cdots,r_{1n}^2),\cdots,(s_1^m,r_{11}^m,\cdots,r_{1n}^m)\right).
\end{aligned}
\end{multline*}

2. For $0<i<m$, the $d_i^h:\mathfrak{Bar}^{(2)}_{n,m}\rightarrow \mathfrak{Bar}^{(2)}_{n,m-1}$ is defined by
\begin{multline*}
d_i^h((s_2,r_{21},\cdots,r_{2n}),(s_1^1,r_{11}^1,\cdots,r_{1n}^1),\cdots,(s_1^m,r_{11}^m,\cdots,r_{1n}^m) )\\
\begin{aligned}
&=((s_2,r_{21},\cdots,r_{2n}),(s_1^1,r_{11}^1,\cdots,r_{1n}^1),\cdots,  \\
& \ \ (s_1^i,r_{11}^i,\cdots,r_{1n}^i)+(s_1^{i+1},r_{11}^{i+1},\cdots,r_{1n}^{i+1}),\cdots,(s_1^m,r_{11}^m,\cdots,r_{1n}^m)).
\end{aligned}
\end{multline*}

3. $d_m^h:\mathfrak{Bar}^{(2)}_{n,m}\rightarrow \mathfrak{Bar}^{(2)}_{n,m-1}$ is defined by
\begin{multline*}
d_m^h((s_2,r_{21},\cdots,r_{2n}),(s_1^1,r_{11}^1,\cdots,r_{1n}^1),\cdots,(s_1^m,r_{11}^m,\cdots,r_{1n}^m) )\\
\begin{aligned}
&=((s_2,r_{21},\cdots,r_{2n}),(s_1^1,r_{11}^1,\cdots,r_{1n}^1),\cdots,(s_1^{m-1},r_{11}^{m-1},\cdots,r_{1n}^{m-1})).
\end{aligned}
\end{multline*}

4. For all $0\leqslant i \leqslant m$, the $s_i^h:\mathfrak{Bar}^{(2)}_{n,m}\rightarrow \mathfrak{Bar}^{(2)}_{n,m+1}$ is defined by
\begin{multline*}
s_i^h((s_2,r_{21},\cdots,r_{2n}),(s_1^1,r_{11}^1,\cdots,r_{1n}^1),\cdots,(s_1^m,r_{11}^m,\cdots,r_{1n}^m) )\\
\begin{aligned}
&=((s_2,r_{21},\cdots,r_{2n}),(s_1^1,r_{11}^1,\cdots,r_{1n}^1),\cdots,\\
&\ \ (s_1^{i},r_{11}^{i},\cdots,r_{1n}^{i}),(0,0,\cdots,0),
(s_1^{i+1},r_{11}^{i+1},\cdots,r_{1n}^{i+1}),\cdots, (s_1^m,r_{11}^m,\cdots,r_{1n}^m) ).
\end{aligned}
\end{multline*}

Similarly, the vertical homomorphisms can be defined as follows:

1. the $d_0^v:\mathfrak{Bar}^{(2)}_{n,m}\rightarrow \mathfrak{Bar}^{(2)}_{n-1,m}$ is defined by
\begin{multline*}
d_0^v((s_2,r_{21},\cdots,r_{2n}),(s_1^1,r_{11}^1,\cdots,r_{1n}^1),\cdots,(s_1^m,r_{11}^m,\cdots,r_{1n}^m) )\\
\begin{aligned}
&=\left((s_2+\eta_2(r_{21}),r_{22}\cdots,r_{2n}), (s_1^1+\eta_1(r_{11}^1),r_{12}^2\cdots,r_{1n}^2),\cdots,(s_1^m+\eta_1(r_{11}^m),r_{12}^m\cdots,r_{1n}^m)\right).
\end{aligned}
\end{multline*}

2. For $0<i<n$, the $d_i^v:\mathfrak{Bar}^{(2)}_{n,m}\rightarrow \mathfrak{Bar}^{(2)}_{n-1,m}$ is defined by
\begin{multline*}
d_i^v((s_2,r_{21},\cdots,r_{2n}),(s_1^1,r_{11}^1,\cdots,r_{1n}^1),\cdots,(s_1^m,r_{11}^m,\cdots,r_{1n}^m) )\\
\begin{aligned}
&=((s_2,r_{21},\cdots,r_{2i}+r_{2(i+1)},\cdots, r_{2n}),(s_1^1,r_{11}^1,\cdots,r_{1i}^1+r_{1(i+1)}^1,\cdots,r_{1n}^1),\cdots,  \\
& \ \ (s_1^m,r_{11}^m,\cdots,r_{1i}^m+r_{1(i+1)}^m\cdots,r_{1n}^m)).
\end{aligned}
\end{multline*}

3.  $d_n^v:\mathfrak{Bar}^{(2)}_{n,m}\rightarrow \mathfrak{Bar}^{(2)}_{n-1,m}$ is defined by
\begin{multline*}
d_n^v((s_2,r_{21},\cdots,r_{2n}),(s_1^1,r_{11}^1,\cdots,r_{1n}^1),\cdots,(s_1^m,r_{11}^m,\cdots,r_{1n}^m) )\\
\begin{aligned}
&=((s_2,r_{21},\cdots,r_{(2n-1)}),(s_1^1,r_{11}^1,\cdots,r_{1(n-1)}^1),\cdots,(s_1^{m-1},r_{11}^{m},\cdots,r_{1(n-1)}^{m})).
\end{aligned}
\end{multline*}

4.For all $0\leqslant i \leqslant n$, the $s_i^v:\mathfrak{Bar}^{(2)}_{n,m}\rightarrow \mathfrak{Bar}^{(2)}_{n+1,m}$ is defined by
\begin{multline*}
s_i^v((s_2,r_{21},\cdots,r_{2n}),(s_1^1,r_{11}^1,\cdots,r_{1n}^1),\cdots,(s_1^m,r_{11}^m,\cdots,r_{1n}^m) )\\
\begin{aligned}
&=((s_2,r_{21},\cdots,r_{2i},0,r_{2(i+1)},\cdots,r_{2n}),(s_1^1,r_{11}^1,\cdots,r_{1i}^1,0,r_{1(i+1)}^1,\cdots,r_{1n}^1),\cdots,\\
&\ \  (s_1^m,r_{11}^m,\cdots,r_{1i}^m,0,r_{1(i+1)}^m,\cdots r_{1n}^m) ).
\end{aligned}
\end{multline*}

In low dimensions, we can illustrate this bisimplicial $k$-module by the diagram:
\begin{equation*}
\xymatrix{ \ldots \ar@<-1.5ex>[d]\ar@<-0.5ex>[d] \ar@<0.5ex>[d] \ar@
<1ex>[r] \ar@<0ex>[r] \ar@<-1ex>[r] &
(S_2\ltimes (R_2)^{2})\times(S_1\ltimes(R_1)^{2}) \ar@<-1.5ex>[d]\ar@<-0.5ex>[d]
\ar@<0.5ex>[d] \ar@ <0.5ex>[r] \ar@<-0.5ex>[r] & (S_2\ltimes R_2^{2})\ar@<-1.5ex>[d]\ar@<-0.5ex>[d] \ar@<0.5ex>[d]
\\ (S_2\ltimes R_2)\times (S_1\ltimes R_1)^{2} \ar@<-0.5ex>[d]
\ar@<0.5ex>[d] \ar@ <1ex>[r] \ar@<0ex>[r] \ar@<-1ex>[r] &
(S_2\ltimes R_2)\times(S_1\ltimes R_1) \ar@<-0.5ex>[d] \ar@<0.5ex>[d]\ar@ <0.5ex>[r]
\ar@<-0.5ex>[r] & S_2\ltimes R_2\ar@<-0.5ex>[d] \ar@<0.5ex>[d]
\\ \ldots S_2\times (S_1)^{2} \ar@
<1ex>[r] \ar@<0ex>[r] \ar@<-1ex>[r] & (S_2\times S_1) \ar@ <1ex>[r]
\ar@<0ex>[r] & S_2}
\end{equation*}%
For instance, in this diagram, the homomorphisms in the first square are given by:
$$
  \begin{array}{rl}
    d_0^v(s_2,r_2)=s_2+\eta_2r_2, &d_0^h(s_2,s_1)=s_2+\alpha_2(s_1)  \\
    d_1^v(s_2,r_2)=s_2, & d_1^h(s_2,s_1)=s_2 \\
    s_0^v(s_2)=(s_2,0), & s_0^h(s_2)=(s_2,0). \\
  \end{array}
$$
and
$$
  \begin{array}{rl}
    d_0^v(s_2,r_2,s_1,r_1)=(s_2+\eta_2r_2,s_1+\eta_1r_1), &d_0^h(s_2,r_2,s_1,r_1)=(s_2+\alpha_2(s_1),r_2+\alpha_1(r_1))  \\
    d_1^v(s_2,r_2,s_1,r_1)=(s_2,s_1), & d_1^h(s_2,r_2,s_1,r_1)=(s_2,r_2) \\
    s_0^v(s_2,s_1)=(s_2,0,s_1,0), & s_0^h(s_2,r_2)=(s_2,r_2,0,0). \\
  \end{array}
$$

Therefore, we obtained a bisimplicial $k$-module, from  the morphism $\alpha$ in the
category of crossed modules of $k$-algebras. Thus we get the following result.

\begin{theorem}\rm{}
Given a morphism $\alpha:\eta_1\rightarrow \eta_2$ in the category of crossed modules of $k$-algebras, \textit{a crossed ideal map structure} on the morphism $\alpha$ gives  \textit{an ideal bisimplicial algebra structure}  on the bisimplicial $k$-module $\mathfrak{Bar}^{(2)}:(S_2//R_2)//(S_1//R_1)$, and conversely, \textit{any ideal bisimplicial algebra structure} on the bisimplicial $k$-module $\mathfrak{Bar}^{(2)}:(S_2//R_2)//(S_1//R_1)$ determines \textit{a crossed ideal map  structure} on the morphism $\alpha:\eta_1\rightarrow \eta_2$.
\end{theorem}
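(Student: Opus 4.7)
The proof proceeds by combining the one-dimensional equivalence established in Sections 2--4 with the crossed square structure encoded by the $h$-map, applied in each of the two simplicial directions.

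\textbf{Plan for the forward direction.} Assume a crossed ideal map structure on $\alpha:\eta_1\to\eta_2$. First I would use the main theorem of Section~4 applied to $\eta_1$ and to $\eta_2$ separately: the ideal structures $l^{(1)}:S_1\to End(R_1)$ and $l^{(2)}:S_2\to End(R_2)$ give ideal simplicial algebra structures on the rows and bottom of $\mathfrak{Bar}^{(2)}$, namely on $S_1//R_1$ and on $S_2//R_2$. Next, the ideal map structures over $\alpha_1:R_1\to R_2$ and $\alpha_2:S_1\to S_2$ give, by the same theorem applied to the columns, ideal simplicial algebra structures on the columns $Bar(R_2,R_1)$ and $Bar(S_2,S_1)$. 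The bisimplicial multiplication at level $(n,m)$ then has two kinds of ``cross products'' to define: the action of an $R_1$-coordinate against an $R_2$- or $S_2$-coordinate. I would define these using the $h$-map: concretely, the product of a pure $R_2$-element $(0,r_2,0,\ldots,0)$ with a pure $S_1$-element $(0,\ldots,0,(s_1,0,\ldots,0))$ (in the appropriate bidegree) will be set equal to the element whose only non-zero entry is $h(r_2,s_1)\in R_1$, and similarly for other mixed pairs using conditions $(a)$--$(d)$. The full formula on general elements is then forced by bilinearity, by the closure of $S'_1=\alpha_2(S_1)$ and $R'_1=\alpha_1(R_1)$ under the ambient actions (which follows from $\mathfrak{CI}3$ and $\mathfrak{CI}4$ in the previous proposition), and by the already-known Section~4 formula in each axis.

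\textbf{Verification of the forward direction.} Once the multiplication is written down, I would check:
(1) associativity and bilinearity at each bidegree, which reduces to a row-by-row and column-by-column check since the mixed terms live in $R'_1$ and are governed by the $h$-map identities $(a)$--$(d)$;
(2) that every horizontal and vertical face map and degeneracy $d^h_i, s^h_i, d^v_j, s^v_j$ is a $k$-algebra homomorphism for the new multiplication --- this follows along each axis from the Section~4 theorem, while the mixed terms transform correctly because of $(a)$ (which says $\alpha_1\circ h(-,s_1)=l_{\alpha_2(s_1)}$) and $(b)$ (which says $\eta_1\circ h(r_2,-)=l_{\eta_2(r_2)}$);
(3) the ideal condition $(iv)$ of Definition~1.1 in each direction, which is immediate by the analogous property in the one-dimensional case.

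\textbf{Plan for the converse.} Suppose $\mathfrak{Bar}^{(2)}$ carries an ideal bisimplicial algebra structure. Restricting to the horizontal direction at $n=0$ and to the vertical direction at $m=0$, I would apply Lemma~2.1 and Proposition~2.6 to obtain ideal map structures $l^{(2)}$ on $\eta_2$ (from column $m=0$), an ideal structure on $\eta_1$ (from a diagonal row-column slice), and ideal map structures on $\alpha_1$ and $\alpha_2$ (from row $n=0$ and its lift). To recover $\alpha_2\eta_1=\eta_2\alpha_1$ I would compute $d^h_0 d^v_0=d^v_0 d^h_0$ at bidegree $(1,1)$ on an element of the form $(0,r_1,0,0)$, which by the simplicial identities forces this square to commute. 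For the $h$-map, I would define $h:R_2\times S_1\to R_1$ by the recipe analogous to Lemma~2.1(2): the unique element $h(r_2,s_1)\in R_1$ such that the product in $\mathfrak{Bar}^{(2)}_{1,1}$ of $(0,r_2,0,0)$ with $(0,0,s_1,0)$ equals $(0,0,0,h(r_2,s_1))$. This is well-defined because of the ideal properties of $R_1\subset \mathfrak{Bar}^{(2)}_{1,1}$ as the kernel of the appropriate composition of faces. Then conditions $(a)$--$(d)$ follow by applying the four face maps $d^h_0, d^h_1, d^v_0, d^v_1$ to this product and comparing with the known actions $l^{(1)},l^{(2)}$ and the maps $\eta_1,\alpha_1$.

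\textbf{Main obstacle.} The hardest part will be verifying that the bisimplicial multiplication defined via the $h$-map is genuinely associative at arbitrary bidegree $(n,m)$ and that all horizontal and vertical face maps are algebra homomorphisms; the combinatorics of mixing an $n$-fold $R$-coordinate with an $m$-fold outer product is unwieldy. The cleanest route is to iterate the Section~4 semidirect product description: identify $\mathfrak{Bar}^{(2)}_{n,m}$ as an iterated semidirect product where each adjacent pair of factors acts via either $l^{(1)}$, $l^{(2)}$ or the action induced by $h$, and then verify the compatibility of these three actions using exactly the four $h$-map identities. Once associativity and the face-degeneracy conditions are in place, the mutual-inverse statement is essentially formal: the composite of the two constructions on $l^{(1)}, l^{(2)}, h$ returns the same data by construction, since on $\mathfrak{Bar}^{(2)}_{1,1}$ the mixed product by definition records precisely $h(r_2,s_1)$, just as in the one-dimensional mutual inversion argument of Section~3.
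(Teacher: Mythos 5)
There is a genuine gap, and it is worth knowing that the paper itself does not prove this theorem: the remark immediately following it states that a proof ``would take too much work and time,'' that the notion of an \emph{ideal bisimplicial algebra structure} on $\mathfrak{Bar}^{(2)}$ has not been defined explicitly, and that the proof is deferred to another work. So there is no argument in the paper to compare yours against, and the first obstruction applies equally to your proposal: you invoke ``the ideal condition $(iv)$ of Definition 1.1 in each direction'' and ``the four face maps,'' but until the bidegree-$(n,m)$ analogue of Definition 1.1 is actually written down (which products are required to collapse, which sub-$k$-modules are required to be ideals, what compatibility is demanded between the horizontal and vertical $\ast$-structures), neither direction of the equivalence is a well-posed claim, let alone a proved one.

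Beyond that, your text is a plan rather than a proof: every substantive step is phrased as ``I would define/check/compute,'' and the two hardest items are exactly the ones left undone. (1) The multiplication on $\mathfrak{Bar}^{(2)}_{n,m}=(S_2\ltimes R_2^{\ltimes n})\times(S_1\ltimes R_1^{\ltimes n})^{\times m}$ is never written; ``forced by bilinearity'' is not enough, because the mixed products of $R_2$- and $S_1$-coordinates land in $R_1$-slots and one must specify \emph{which} slot and verify that the placement is consistent with all $d^h_i$, $d^v_j$ simultaneously --- this is the bisimplicial analogue of Lemma 3.5, and it requires an induction you have not set up. (2) Associativity of the iterated semidirect product built from $l^{(1)}$, $l^{(2)}$ and $h$ is precisely where the crossed-square axioms enter, and asserting that it ``reduces to a row-by-row and column-by-column check'' begs the question, since the whole content is the interaction between the two directions. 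Your converse direction is more nearly complete --- the recovery of $h(r_2,s_1)$ from the product of $((0,r_2),(0,0))$ and $((0,0),(s_1,0))$ in $\mathfrak{Bar}^{(2)}_{1,1}$, justified by intersecting the ideals $\ker d^v_1$ and $\ker d^h_1$, is a correct and genuinely useful observation that mirrors Lemma 3.1 --- but conditions $(a)$--$(d)$ and the $S_2$-bilinearity of $h$ are again only announced, not derived. As it stands the proposal is a credible research outline consistent with the paper's stated intentions, not a proof of the theorem.
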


\begin{remark}\rm{}
Obviously, proving this theorem would take too much  work and time.  In order to prove it, we would need to give the notion of  `\textit{ideal bisimplicial algebra structure}' over the bisimplicial $k$-module  $\mathfrak{Bar}^{(2)}$ explicitly.    So we will clarify it in another work. Of course, this result can be iterated to the crossed $n$-cube structure defined by Ellis in \cite{ellis}. In this case, we would need to give a detailed  definition of  a \textit{crossed $n$-ideal} of a crossed $n$-cube and a \textit{crossed $n$-ideal structure} over the morphism between crossed $(n-1)$ cubes.  Then it would give a multi-simplicial algebra in dimension $n$, or an $n$-simplicial algebra together with this structure.
\end{remark}

$
\begin{array}{lll}
\text{Alper Odaba{\c s} }   & & \text{Erdal Ulualan}\\
\text{Eski\c{s}ehir Osmangazi University},& &  \text{Dumlup\i nar University} \\
\text{Science and Art Faculty}  & & \text{Science and Art Faculty}\\
\text{Department of Mathematics-Computer} & & \text{Mathematics Department}\\
\text{26480, Eski\c{s}ehir, TURKEY}  & & \text{K\"{u}tahya, TURKEY} \\
\text{e-Mail: aodabas@ogu.edu.tr,} & &
\text{e-Mail: erdal.ulualan@dpu.edu.tr}\\
\end{array}%
$

\end{document}